\definecolor{green3}{rgb}{0,0.6,0}
\newcommand{\R}{\mathbb R}
\newcommand{\C}{\mathbb{C}}
\newcommand{\Z}{\mathbb{Z}}
\newcommand{\N}{\mathbb{N}}
\newcommand{\Q}{\mathbb{Q}}
\newcommand{\K}{\mathbb{K}}
\renewcommand{\P}{\mathbb{P}}
\newcommand{\LL}{\mathbb{L}}
\newcommand{\cA}{\mathcal{A}}
\newtheorem{proposition}{Proposition}
\newtheorem{theorem}[proposition]{Theorem}
\newtheorem{remark}[proposition]{Remark}
\newtheorem{corollary}[proposition]{Corollary}
\newtheorem{lemma}[proposition]{Lemma}
\newtheorem{example}{Example}
\newenvironment{proof}{
\trivlist \item[\hskip \labelsep\mbox{\it Proof:
}]}{\hfill\mbox{$\square$}
\endtrivlist}
\title{On degree bounds for the sparse Nullstellensatz}
\author{Mar\'\i a Isabel Herrero$^{\natural,\diamondsuit,}$\footnote{Partially supported by the following Argentinean research grants: PIP 11220130100527CO (CONICET) and UBACYT 20020160100039BA (2017-2019).}, Gabriela Jeronimo$^{{ \natural,\flat,\diamondsuit},*}$, Juan Sabia$^{{\flat, \diamondsuit}, *}$}
\begin{document}

\maketitle

\noindent {\small ${\natural}$ Universidad de Buenos Aires. Facultad de Ciencias Exactas y Naturales. Departamento de Matem\'atica. Buenos Aires,  Argentina.}\\
{\small $\flat$ Universidad de Buenos Aires. Ciclo B\'asico Com\'un. Departamento de Ciencias Exactas. Buenos Aires,  Argentina.}\\
{\small ${^\diamondsuit}$ Universidad de Buenos Aires. Consejo Nacional de Investigaciones Cient\'\i ficas y T\'ecnicas. Instituto de Investigaciones Matem\'aticas ``Luis A. Santal\'o'' (IMAS). Buenos Aires, Argentina.}

\bigskip

\noindent E-mails: iherrero@dm.uba.ar, jeronimo@dm.uba.ar, jsabia@dm.uba.ar

\bigskip

\begin{abstract}
We prove new upper bounds for the degrees in Hilbert's Nullstellensatz and for the Noether exponent of polynomial ideals in terms of the monomial structure of the polynomials involved. Our bounds improve the previously known bounds in the sparse setting and are the first to take into account the different supports of the polynomials.
\end{abstract}

\bigskip
\noindent \textbf{Keywords:} Effective Hilbert's Nullstellensatz, Noether exponent, Sparse polynomials, Mixed volume.

\medskip
\noindent \textbf{MSC 2010:} 14Q20, 13P99.

\section{Introduction}

Hilbert's Nullstellensatz states that, given polynomials $f_1, \dots, f_s \in k[x_1,\dots, x_n]$ with coefficients in a field $k$ and no common zeroes in the $n$-dimensional affine space over the algebraic closure of $k$, there exist polynomials $g_1, \dots, g_s  \in k[x_1,\dots, x_n]$ such that $1 = \sum_{i=1}^s g_i f_i$. An effective version of this theorem consists in giving upper bounds for the degrees or a characterization of the Newton polytopes of polynomials $g_1, \dots, g_s$ satisfying this identity in terms of the polynomials $f_1, \dots, f_s$.

A great deal of work on the effective Nullstellensatz has been done over the last 30 years (see successive advances in, for example, \cite{Brownawell87}, \cite{Kollar88}, \cite{FitGal90},  \cite{SaSo95},  \cite{Sombra99},  \cite{KPS01}, \cite{Jelonek05}).
The best upper bounds for the degrees of polynomials $g_1,\dots, g_s$ in terms of the degrees of $f_1, \dots, f_s$ are the ones obtained in \cite{Kollar88}, slightly improved in the case $s \le n$  in \cite{Jelonek05}.

When considering particular cases, the degrees of the input polynomials may not be enough for obtaining sharp estimates. A foundational work in this sense is \cite{Bernstein75}, where the Newton polytopes of polynomials and their volumes are introduced to count the number of common zeros  of  polynomial systems (for further development of this theory, see for instance \cite{GKZ94} and \cite{Sturmfels93}). In this sparse setting, the effective Nullstellensatz problem was addressed in \cite{Sombra99}, where the monomial structure of the polynomials $g_if_i$ is characterized leading to upper bounds for the degrees that can improve the previous ones.  The sparse Nullstellensatz from \cite{Sombra99} was dramatically improved in \cite{CanEmi00}, \cite{Tuitman11} and \cite{Wulcan11} for systems with no common roots at toric infinity (for the classical Nullstellensatz, the same behaviour of the degree bounds for generic systems follows from the work of Macaulay, see \cite{Macaulay1916}).
In \cite{CanEmi00}, the authors conjectured that their genericity conditions could be dropped, and this question was addressed in \cite{Massri2016}.

A problem that is closely related to the effective Nullstellensatz consists in estimating the so-called Noether exponent of a polynomial ideal. For an ideal $I\subset k[x_1,\dots, x_n]$, the Noether exponent of $I$ is the minimum integer $\mu$ such that $(\sqrt{I})^\mu \subseteq I$. Given  $f_1,\dots, f_s \in k[x_1,\dots, x_n]$ that generate the ideal $I$, the problem is to find an upper bound for $\mu$ depending on parameters associated to $f_1,\dots, f_s$. Different bounds for this exponent $\mu$  in terms of the degrees and the Newton polytopes of the given generators have been proved (see, for instance, \cite{Kollar88}, \cite{Sombra99}, \cite{Jelonek05}, \cite{Wulcan11}).

In this paper, we prove new bounds for both the degrees in the Nullstellensatz and the Noether exponent of an ideal in the sparse setting. Our work is in the vein of \cite{Jelonek05} and \cite{Sombra99}, in the sense that we consider \emph{arbitrary} sparse systems (that is, no genericity assumptions are made).

First, from the results in \cite{Jelonek05}, we obtain bounds depending on the volume of a convex polytope containing the supports of the given polynomials and the vertex set of the standard unitary simplex (see Propositions \ref{prop:Noetherexpunmixed} and \ref{prop:Nullstunmixed}) which improve the previously known bounds depending on the same invariants.

The main results of the paper are the first upper bounds that distinguish the different supports of the given polynomials. In this setting, we prove upper bounds for the Nullstellensatz (see Theorem \ref{thm:sparseNullsbound} and Corollary \ref{coro:smayorn+1}) and for the Noether exponent of ideals (see Theorem \ref{thm:Noetherexp}). For these mixed sparse systems, our bounds can be considerably smaller than the previous bounds, as illustrated by Examples \ref{ex:MejoraPrevias} and \ref{ex:expNoether}, and, for arbitrary polynomial systems, they differ from them by at most a factor equal to the maximum of the polynomial degrees.

\section{Preliminaries}

Throughout this paper, we work with polynomials with coefficients in a field of characteristic $0$. If $k$ is a field, we write $\overline{k}$ for an algebraic closure of $k$.

Given a finite set $\cA \subset (\Z_{\ge 0})^n$, a \emph{sparse polynomial supported on $\cA$} with coefficients in a field $k$ is a polynomial $f\in k[x_1,\dots, x_n]$ of the form
 $f = \sum_{\alpha\in \cA} c_{\alpha} x^\alpha$ with $c_{\alpha} \in k\setminus\{ 0\}$ for every $\alpha \in \cA$. Here, for $\alpha= (\alpha_1,\dots, \alpha_n) \in (\Z_{\ge 0})^n$, we write $x^\alpha = x_1^{\alpha_1}\dots x_n^{\alpha_n}$.

We say a system of polynomials $f_1,\dots, f_s \in k[x_1,\dots, k_n]$ is an \emph{unmixed} polynomial system supported on $\cA\subset (\Z_{\ge 0})^n$ if, for every $1\le i \le s$, $f_i$ is a polynomial supported on $\cA$. When $f_1,\dots, f_s\in k[x_1,\dots, x_n]$ are supported on (possibly) different subsets $\cA_1,\dots, \cA_s\subset (\Z_{\ge 0})^n$, we say they form a \emph{mixed} sparse polynomial system.
We adopt the notation $V(f_1,\dots, f_s)$ for the set of common zeros of $f_1,\dots, f_s$ in $\overline{k}^n$.

For a family of $n$ finite sets $\cA_1,\dots, \cA_n\subset (\Z_{\ge 0})^n$, we denote $MV_n(\cA_1,\dots, \cA_n)$ the mixed volume of the convex hulls of $\cA_1,\dots, \cA_n$ in $\R^n$ (see, for instance, \cite[Chapter 7]{CLO98} for a definition and basic properties of this notion).
In the case where $\cA_1= \dots = \cA_n= \cA$, we have that $MV_n(\cA_1,\dots, \cA_n) = n! Vol_n(\cA)$, where $Vol_n(\cA)$ is the Euclidean volume in $\R^n$ of the convex hull of $\cA$.
By Bernstein's theorem (see \cite{Bernstein75}), the mixed volume $MV_n(\cA_1,\dots, \cA_n)$ is an upper bound for the number of isolated roots in $(\C\setminus\{ 0\})^n$ of a system of sparse polynomials in $\C[x_1,\dots, x_n]$ supported on $\cA_1,\dots, \cA_n$.

We will write $\Delta_n$ for the vertex set of the standard unitary simplex in $\R^n$, that is, $\Delta_n= \{0, e_1,\dots, e_n\}\subset (\Z_{\ge 0})^n$, where $e_i$ is the $i$th vector of the canonical basis of $\R^n$. Note that $\Delta_n$  is the support set of a generic affine linear polynomial in $n$ variables.

For a finite subset $\cA\subset (\Z_{\ge 0})^n$, we denote by $\mbox{conv}(\cA)$ the convex hull of $\cA$ in $\R^n$. Given a positive integer $m$, we write $m\cdot \cA := m \cdot \mbox{conv}(\cA) = \{ \alpha_1 +\cdots +\alpha_m \mid \alpha_i \in \mbox{conv}(\cA) \ \forall\, 1\le i \le m\}$.
For an integer $1\le r\le n$, we  write $\cA^{(r)}$ for the family consisting of $r$  sets equal to $\cA$.
In particular, we will frequently use the notation $\Delta_n^{(r)}$, which represents the family of supports of $r$ linear forms in $n$ variables. We will also use the notation $\widetilde \cA$ for the set $\widetilde \cA := \{ 0\} \times \cA = \{ (0,\alpha) \in \Z^{n+1} \mid \alpha \in \cA\}$.

\section{The unmixed case}

In this section we will deduce upper bounds for the Noether exponent of an ideal and the Nullstellensatz for unmixed sparse systems by applying the results from \cite{Jelonek05} to suitable toric varieties. With no loss of generality, we work with polynomials with coefficients in an algebraically closed field $\K$ (note that, once a bound is obtained,  over an arbitrary field $k$ all the coefficients of the polynomials involved are solutions to linear systems over $k$).

\begin{proposition}\label{prop:Noetherexpunmixed}
Let $f_1, \dots, f_s \in \K[x_1, \dots, x_n]$ be nonzero polynomials with supports contained in a finite set $\cA \subset (\Z_{\ge0})^n$ and let $I$ be the ideal of $\K[x_1, \dots, x_n]$ generated by $f_1, \dots, f_s$. Then, $(\sqrt{I})^\mu \subset I$ for
$\mu \le n! {Vol}_n(\cA\cup\Delta_n).$
\end{proposition}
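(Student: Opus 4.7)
The plan is to reduce the problem to Jelonek's effective Noether exponent bound by embedding $\K^n$ as a closed affine subvariety of some $\K^N$ with respect to which each $f_i$ becomes an affine linear polynomial. Set $\cB := \cA \cup \Delta_n$ and enumerate $\cB \setminus \{0\} = \{\beta_1, \dots, \beta_N\}$. Consider the monomial map $\psi : \K^n \to \K^N$ defined by $\psi(x) = (x^{\beta_1}, \dots, x^{\beta_N})$. Because $\Delta_n \subset \cB$, the canonical basis vectors $e_1,\dots,e_n$ appear among the $\beta_j$, so $\psi$ admits a polynomial left inverse (projection onto the corresponding coordinates) and is thus a closed embedding onto a closed $n$-dimensional affine subvariety $Y := \psi(\K^n) \subset \K^N$. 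The comorphism $\psi^*: \cO(Y) \to \K[x_1,\dots,x_n]$ is a ring isomorphism sending each coordinate restriction $z_j|_Y$ to the monomial $x^{\beta_j}$.

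Since $\cA \subset \cB$, for every $1 \le i \le s$ there is an affine linear polynomial $\ell_i \in \K[z_1,\dots,z_N]$ with $f_i = \ell_i \circ \psi$. The ideal $I = (f_1,\dots,f_s) \subset \K[x_1,\dots,x_n]$ thus corresponds, via $\psi^*$, to the ideal $J := (\ell_1|_Y,\dots,\ell_s|_Y) \subset \cO(Y)$, and the two ideals have the same Noether exponent. Moreover, the projective closure of $Y$ in $\P^N$ is the projective toric variety associated with $\cB$, whose degree in this embedding is the normalized volume $n!\,Vol_n(\conv(\cB)) = n!\,Vol_n(\cA \cup \Delta_n)$; in particular $\deg(Y)$ equals this same quantity.

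With this setup, the proof concludes by applying the Noether exponent bound from \cite{Jelonek05} to the $n$-dimensional affine variety $Y$ of degree $n!\,Vol_n(\cA \cup \Delta_n)$ and to the generators $\ell_1,\dots,\ell_s$, all of which have degree at most $1$. Since Jelonek's bound depends multiplicatively on the generator degrees and linearly on the degree of the ambient variety, specializing to linear generators collapses the product to the single factor $n!\,Vol_n(\cA \cup \Delta_n)$, producing the desired estimate for $\mu$.

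The main obstacle is to check that Jelonek's statement really simplifies cleanly in the linear case with the correct quantitative form: one must track its dependence on the degrees of the generators and on the degree of the ambient affine variety, and confirm that no extraneous factor appears once all degrees equal $1$. A subsidiary point is to justify that the reduction indeed enables the bound $n!\,Vol_n(\cA \cup \Delta_n)$ rather than, say, $n!\,Vol_n(\cA)$: enlarging the support by $\Delta_n$ is what guarantees $\psi$ is a closed embedding, which in turn is what lets the sparse polynomials $f_i$ be realized as linear restrictions on $Y$.
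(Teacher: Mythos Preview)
Your proposal is correct and follows essentially the same approach as the paper: both arguments embed $\K^n$ as an affine toric variety via the monomial map associated to $\cA\cup\Delta_n$, so that each $f_i$ becomes the restriction of a linear form, and then invoke \cite[Theorem~1.3]{Jelonek05} on that variety of degree $n!\,Vol_n(\cA\cup\Delta_n)$. The only cosmetic difference is that the paper keeps the coordinates $x_1,\dots,x_n$ and the monomials $x^{\alpha_j}$ for $\alpha_j\in\cA$ separate (and adds a harmless extra coordinate $y_0=1$), whereas you merge them into a single list indexed by $\cB\setminus\{0\}$.
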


\begin{proof}
Let $\cA = \{ \alpha_1,\dots, \alpha_N\}\subset (\Z_{\ge 0})^n$. Following \cite[Theorem 2.10]{Sombra99}, consider the affine toric variety
$$\mathcal{X} = \{ (1,x_1,\dots, x_n, x^{\alpha_1},\dots, x^{\alpha_N}) \mid x\in \K^n\}\subset \K^{1+n+N}.$$
We have that $\mathcal{X}$ may be defined by polynomial equations as
$$\mathcal{X} = \{(y_0,y_1,\dots, y_n, y_{n+1},\dots, y_{n+N}) \in \K^{1+n+N} \mid y_0=1, y_{n+j} = y_1^{\alpha_{j1}}\dots y_n^{\alpha_{jn}}, j=1,\dots, N \},$$
and that it is an irreducible variety of dimension $n$ and degree  $n!\, Vol_n(\cA\cup\Delta_n)$ (by Bernstein's theorem \cite{Bernstein75}).

Consider the map $\varphi\colon\K[y_0,\dots, y_{n+N}] \to \K[x_1,\dots, x_n]$ defined by $\varphi(y_0) = 1$, $\varphi(y_i) = x_i$ for $1\le i\le n$ and $\varphi(y_{n+j}) = x^{\alpha_j}$ for $1\le j \le N$. The kernel of the map $\varphi$ is the defining ideal of the variety $\mathcal{X}$.

Given $f_1,\dots, f_s$ supported on a subset of $\cA$, there exist linear forms $L_1,\dots, L_s\in \K[y_0,\dots, y_{n+N}]$ such that $\varphi(L_i)=  f_i$  for $i=1,\dots, s$ (if $f_i =\sum_{j=1}^N c_{i\alpha_j} x^{\alpha_j}$, we can take $L_i = \sum_{j=1}^N c_{i\alpha_j} y_{n+j}$). Consider the ideal $\mathcal{I}:=(L_1,\dots, L_s) \subset \K[\mathcal{X}]$.
By \cite[Theorem 1.3]{Jelonek05}, $(\sqrt{\mathcal{I}})^D \subset \mathcal{I}$ for $D:= \deg(\mathcal{X}) = n! Vol_n(\cA \cup\Delta_n)$. Therefore, $(\sqrt{I})^D \subset I$.
\end{proof}

\begin{proposition}\label{prop:Nullstunmixed}
Let $f_1, \dots, f_s \in \K[x_1, \dots, x_n]$ be nonzero polynomials with supports contained in a finite set $\cA \subset (\Z_{\ge0})^n$. Let  $d=\max\{\deg(f_i) \mid 1 \le i \le s\}$. If $V(f_1,\dots, f_s)=\emptyset$, there exist polynomials $g_1, \dots, g_s\in \K[x_1, \dots, x_n]$ such that
$$1 = \sum_{i=1}^s g_i f_i \quad \hbox{and} \quad \deg(g_if_i) \le d \, n! \, Vol_n(\cA\cup\Delta_n) \ \hbox{ for every } 1\le i \le s.$$
Moreover, the Newton polytope of $g_i$ is contained in $(n! \, Vol_n(\cA\cup\Delta_n)-1) \cdot \mbox{conv}(\cA \cup \Delta_n)$  for every $1\le i \le s$.
\end{proposition}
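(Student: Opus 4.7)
The plan is to run the same toric reduction as in Proposition \ref{prop:Noetherexpunmixed} and then, in place of the Noether-exponent statement of \cite[Theorem~1.3]{Jelonek05}, apply an effective Nullstellensatz on affine varieties from the same paper. Set $D := n!\,Vol_n(\cA\cup\Delta_n)$. Without loss of generality I take $\cA$ to be the union of the supports of $f_1,\dots,f_s$, so that $\max_{\alpha\in\cA\cup\Delta_n}|\alpha|=d$; replacing the given $\cA$ by this smaller set only shrinks $D$ and $\conv(\cA\cup\Delta_n)$, so it suffices to establish the bounds in this reduced case. Reuse the affine toric variety $\mathcal{X}\subset\K^{1+n+N}$, the surjection $\varphi\colon\K[y_0,\dots,y_{n+N}]\to\K[x_1,\dots,x_n]$ whose kernel is $I(\mathcal{X})$, and the linear forms $L_i$ with $\varphi(L_i)=f_i$ from the previous proof.

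Each point of $\mathcal{X}$ has the form $(1,x_1,\dots,x_n,x^{\alpha_1},\dots,x^{\alpha_N})$, and $L_i$ evaluated at such a point is exactly $f_i(x)$. Hence the hypothesis $V(f_1,\dots,f_s)=\emptyset$ forces $L_1,\dots,L_s$ to have no common zero on $\mathcal{X}$. An effective Nullstellensatz from \cite{Jelonek05}, applied to the irreducible $n$-dimensional variety $\mathcal{X}$ of degree $D$ and the linear forms $L_i$, then yields polynomials $H_1,\dots,H_s\in\K[y_0,\dots,y_{n+N}]$ with $\deg H_i\le D-1$ such that $1-\sum_{i=1}^{s}L_iH_i\in I(\mathcal{X})$. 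Applying $\varphi$ gives the identity $1=\sum_{i=1}^{s}g_i f_i$ in $\K[x_1,\dots,x_n]$ with $g_i:=\varphi(H_i)$.

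To read off the Newton polytope and degree bounds, write a generic monomial of $H_i$ as $y_0^{b_0}\prod_{k=1}^{n}y_k^{b_k}\prod_{j=1}^{N}y_{n+j}^{b_{n+j}}$ with total degree $\sum_{k}b_k\le D-1$. Its image under $\varphi$ is $x^\beta$ with $\beta=\sum_{k=1}^{n}b_k e_k+\sum_{j=1}^{N}b_{n+j}\alpha_j$, a sum of at most $D-1$ vectors in $\cA\cup\Delta_n$; since $0\in\Delta_n$, padding with zero vectors shows $\beta\in(D-1)\cdot\conv(\cA\cup\Delta_n)$. Thus the Newton polytope of $g_i$ sits in $(D-1)\cdot\conv(\cA\cup\Delta_n)$, and, adding the support of $f_i$, the Newton polytope of $g_if_i$ sits in $D\cdot\conv(\cA\cup\Delta_n)$. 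The maximum coordinate sum on this polytope is $D\cdot\max_{\alpha\in\cA\cup\Delta_n}|\alpha|=d\,D$, which gives the degree bound.

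The main obstacle I expect is pinning down the right statement in \cite{Jelonek05}: namely, that on an irreducible affine variety of degree $D$ a family of linear forms with empty common zero locus admits a Nullstellensatz identity with ambient degree at most $D$. Once this (essentially Macaulay-type) bound is cited, the pullback and Minkowski-sum bookkeeping runs in lockstep with that of Proposition \ref{prop:Noetherexpunmixed}.
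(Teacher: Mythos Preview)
Your approach is essentially the same toric reduction as the paper's, and the pullback/Newton-polytope bookkeeping in your last paragraph matches the paper's exactly. The one substantive difference is the choice of ambient variety and the Jelonek result invoked. The paper works with the \emph{projective} closure $\mathcal{Y}\subset\P^{n+N}$ of the image of $\psi(x)=(1:x_1:\dots:x_n:x^{\alpha_1}:\dots:x^{\alpha_N})$. Because $V(f_1,\dots,f_s)=\emptyset$, the coordinate $y_0$ lies in $\sqrt{(L_1,\dots,L_s)}\subset\K[\mathcal{Y}]$, and \cite[Corollary~1.4]{Jelonek05} (the Noether-exponent bound, not an affine Nullstellensatz) gives $y_0^{D}\in(L_1,\dots,L_s)$. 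Since $\K[\mathcal{Y}]$ is graded and $y_0^D$ is homogeneous of degree $D$, one may take homogeneous $G_i$ with $\deg(G_iL_i)=D$ for free; dehomogenizing then yields the $g_i$.

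Your affine route is correct provided the needed statement is available, but the obstacle you flag is real: you need an effective Nullstellensatz on an arbitrary irreducible affine variety that returns ambient degree $\le D$ for linear forms. The paper's projective variant sidesteps this entirely---the degree control comes from homogeneity rather than from a separate Nullstellensatz bound---so it is the cleaner packaging of the same idea.
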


\begin{proof} Let $\cA = \{ \alpha_1,\dots, \alpha_N\}\subset (\Z_{\ge 0})^n$. Consider the variety $\mathcal{Y}\subset \P^{n+N}$ defined as the projective closure of the image of the map
$$\psi: \K^n \to \P^{n+N}, \quad \psi(x) = (1:x_1:\dots: x_n: x^{\alpha_1}:\dots: x^{\alpha_N}),$$
which is an irreducible variety of dimension $n$ and degree $n!\, Vol_n(\cA\cup\Delta_n)$.

Assuming $f_i = \sum_{j=1}^N c_{i\alpha_j} x^{\alpha_j}$ for $i=1,\dots, s$, let $L_i= \sum_{j=1}^N c_{i\alpha_j} y_{n+j}$. Due to the fact that $V(f_1,\dots, f_s) =\emptyset$,  we have that $y_0$ lies in the radical of the ideal $(L_1,\dots, L_s) \subset \K[\mathcal{Y}]$.
Applying \cite[Corollary 1.4]{Jelonek05}, we deduce that $y_0^D \in (L_1,\dots, L_s)\subset \K[\mathcal{Y}]$ for $D= \deg(\mathcal{Y}) = n! Vol_n(\cA \cup \Delta_n)$.  Then, there exist homogeneous polynomials $G_1,\dots, G_s \in \K[y_0,\dots, y_{n+N}]$ such that $y_0^D = \sum_{i=1}^s G_i L_i$ $\mod I(\mathcal{Y})$ and $\deg(G_i L_i) = D$ for $i=1,\dots, s$.

Dehomogenizing and taking into account that the polynomials $y_{n+j} y_0^{|\alpha_j|-1}- y_1^{\alpha_{j1}}\dots y_n^{\alpha_{jn}}$, where $|\alpha_j| = \sum_{k=1}^n \alpha_{jk}$, generate the ideal of the affine chart $\mathcal{Y} \cap \{ y_0 \ne 0\}$, we obtain an equality $1= \sum_{i=1}^s g_i f_i$, where $g_i(x) =G_i(1,x_1,\dots, x_n, x^{\alpha_1},\dots, x^{\alpha_N})$ for $i=1,\dots, s$. We conclude that $\deg(g_i f_i)\le \max_{1\le j \le N}\{|\alpha_j|\}. \deg(G_i L_i) \le d D$ and that the Newton polytope of $g_i$ is contained in $(D-1) \mbox{conv}(\cA \cup \Delta_n)$.
\end{proof}

In the following example we can see how these bounds for
the degrees in the Nullstellensatz and the Noether exponent improve the
bounds from \cite{Jelonek05}, \cite{Kollar88}, \cite{KPS01} and \cite{Sombra99}.

\begin{example}\label{ex:Sombra2.12 T3<R4}(see \cite[Example 2.12]{Sombra99} and \cite[Example 4.13]{KPS01}) Let $s\ge 2$, $n\ge 2$, $\delta \ge 2$ and, for $i=1,\dots, s$, let $f_i = a_{i0} +\sum_{j=1}^n a_{ij} x_j+
\sum_{k=1}^{\delta}b_{ik}x_1^{k}\dots x_n^{k}\in \Q[x_1,\dots, x_n]$ be polynomials with supports contained in $\cA = \Delta_n \cup \{ k (e_1+\dots+e_n) ; k=1,\dots, \delta\}$
without common zeros in $\C^n$. Then, by Proposition \ref{prop:Nullstunmixed}, there exist
$g_1, \dots, g_s\in \Q[x_1,\dots, x_n]$ such that
$$\sum_{i=1}^sg_if_i=1\quad  \hbox{and} \quad \deg(g_if_i)\le (n\delta)^2,$$ since
$Vol_n(\cA)= \delta/(n-1)!$ and $\deg(f_i) = n\delta$ for every $1\le i \le s$. Our bound is sharper than the previous ones for these polynomials:
\begin{itemize}
\item the bound from \cite[Corollary 2.11]{Sombra99} is
$\deg(g_if_i ) \le \min\{n+1,s\}^2(n\delta)^2$;
\item from \cite[Corollary 3]{KPS01}, the bound $\deg(g_i) \le 2n^4\delta^2$ is obtained;
\item the bound from \cite[Corollary 1.9]{Kollar88} is $\deg(g_if_i ) \le (n\delta)^{\min\{n,s\}}$;
\item the bound from \cite[Theorem 1.1]{Jelonek05} is $\deg(g_if_i ) \le (n\delta)^{s}$ if $s\le n$ and $\deg(g_if_i ) \le 2 (n\delta)^{n}-1$ if $s>n$.
\end{itemize}

For arbitrary polynomials $f_1,\dots, f_s$ supported on a subset of the previous set $\cA$, the bound for the Noether exponent of the ideal $(f_1,\dots, f_s) \subset \Q[x_1,\dots, x_n]$ from Proposition \ref{prop:Noetherexpunmixed} is $\mu \le n\delta,$ whereas the bound in \cite[Corollary 2.11]{Sombra99} is
$\mu\le \min\{n+1,s\}^2 n \delta $ and
 the bound in \cite[Corollary 1.7]{Kollar88} and \cite[Theorem 1.3]{Jelonek05} is
$\mu\le  (n \delta)^{\min\{n, s\}}$.

Note that, for systems with no zeros at toric infinity, our bounds here would be similar to those obtained in \cite{Tuitman11} and \cite{Wulcan11}, but we do not make any assumptions on the polynomials.
\end{example}

\section{The mixed case}

The aim of this section is to prove upper bounds for the degrees in the Nullstellensatz and for the Noether exponent of ideals generated by mixed sparse systems that take into account the different supports of the given polynomials.
We will follow the approach in \cite{Jelonek05}, which allows us to improve the known bounds but,
unlike the ones in \cite{Sombra99}, \cite{Tuitman11} and \cite{Wulcan11}, does not give a priori estimates of the Newton polytopes of the polynomials involved.

First we prove some auxiliary results.

\begin{lemma}\label{lem:reduced}
Let $\mathbb{L}$ be a field of characteristic zero, $W \in \mathbb{L}[t_0,\dots, t_n]$ be a reduced polynomial  and $D$ a positive integer. If $W(0,t_1, \dots, t_n) \ne 0$, then $W(T_0^D, t_1, \dots, t_n)$ is reduced in $\mathbb{L}[T_0,\dots, t_n]$.
\end{lemma}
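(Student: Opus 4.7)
The plan is to view $W$ as a polynomial in $t_0$ with coefficients in $\mathbb{L}[t_1,\ldots,t_n]$, work over the algebraic closure $K:=\overline{\mathbb{L}(t_1,\ldots,t_n)}$, and explicitly check that $\phi(W):=W(T_0^D,t_1,\ldots,t_n)$ has $dD$ distinct roots in $K[T_0]$ (where $d=\deg_{t_0} W$). The geometric picture is that $t_0\mapsto T_0^D$ induces a degree-$D$ cover ramified only over $t_0=0$, and the hypothesis $W(0,t_1,\ldots,t_n)\neq 0$ ensures the hypersurface $W=0$ avoids the ramification locus, so its pullback should remain reduced.

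First I would reduce to the case where $W$ is primitive in $t_0$. Writing $W = c(t_1,\ldots,t_n)\cdot W_0$ with $c$ the $t_0$-content and $W_0$ primitive in $t_0$, reducedness of $W$ forces $c$ to be square-free, and primitivity of $W_0$ gives $\gcd(c,W_0)=1$. The polynomial $\phi(W_0)$ is still primitive in $T_0$ (its $T_0$-coefficients are precisely those of $W_0$ in $t_0$, placed at different powers), so it remains coprime to $c$; hence $\phi(W) = c\cdot\phi(W_0)$ is reduced if and only if $\phi(W_0)$ is. We may also assume $d\ge 1$, since otherwise $\phi(W)=W$.

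Now factor $W = a\prod_{j=1}^{d}(t_0-\beta_j)$ over $K[t_0]$, where $a\in \mathbb{L}[t_1,\ldots,t_n]$ is the leading coefficient of $W$ in $t_0$. Two key facts: (i) the $\beta_j$ are pairwise distinct, because $W$ remains reduced in $\mathbb{L}(t_1,\ldots,t_n)[t_0]$ by Gauss's lemma, and in characteristic zero each irreducible factor over $\mathbb{L}(t_1,\ldots,t_n)$ is separable, while distinct irreducibles have disjoint root sets in $K$; (ii) each $\beta_j$ is nonzero, since otherwise $t_0\mid W$, contradicting $W(0,t_1,\ldots,t_n)\neq 0$. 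Fixing a primitive $D$-th root of unity $\zeta\in K$ and $\alpha_j\in K$ with $\alpha_j^D=\beta_j$, we obtain
\[
\phi(W)=a\prod_{j=1}^{d}(T_0^D-\beta_j)=a\prod_{j=1}^{d}\prod_{k=0}^{D-1}(T_0-\zeta^{k}\alpha_j).
\]
All $dD$ roots listed are pairwise distinct: from $\zeta^{k}\alpha_j=\zeta^{k'}\alpha_{j'}$, raising to the $D$-th power gives $\beta_j=\beta_{j'}$, hence $j=j'$, and then $\zeta^k=\zeta^{k'}$ forces $k=k'$. So $\phi(W)$ is square-free in $K[T_0]$, and therefore also in $\mathbb{L}(t_1,\ldots,t_n)[T_0]$.

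To finish, I would apply Gauss's lemma once more: $\phi(W)$ is primitive in $T_0$, so any hypothetical square factor in $\mathbb{L}[T_0,t_1,\ldots,t_n]$ would pull back to a square factor in $\mathbb{L}(t_1,\ldots,t_n)[T_0]$, contradicting what was just shown. The step I expect to require the most care is the distinctness in (i): transferring reducedness from $\mathbb{L}[t_0,\ldots,t_n]$ down to $\mathbb{L}(t_1,\ldots,t_n)[t_0]$ via Gauss's lemma, and then invoking separability in characteristic zero to pass from irreducibility over $\mathbb{L}(t_1,\ldots,t_n)$ to distinct roots in $K$. The remaining ingredients are essentially bookkeeping once this is in place.
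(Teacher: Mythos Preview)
Your argument is correct and follows essentially the same route as the paper's proof: reduce to irreducible factors that actually involve $t_0$ (your extraction of the $t_0$-content $c$ is exactly the paper's ``without loss of generality all irreducible factors of $W$ have positive degree in $t_0$''), pass to the univariate ring $\mathbb{L}(t_1,\ldots,t_n)[t_0]$, and use that in characteristic zero a square-free univariate polynomial whose roots are all nonzero stays square-free after substituting $T_0^D$. The paper compresses all of this into two sentences; your version spells out the Gauss's-lemma transfers and the explicit root count, which is exactly the care the paper's sketch leaves to the reader.
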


\begin{proof}
If $n=0$, the statement follows straightforward because $W(T_0^D)$ has only simple roots in an algebraic closure of $\mathbb{L}$.
For $n \ge 1$,  without loss of generality, it suffices to consider the case when all the irreducible factors of $W$ have positive degree in $t_0$. The proof follows from the univariate case by considering $W$ in $\mathbb{L}(t_1,\dots,t_n)[t_0]$.
\end{proof}

The following proposition can be regarded as a sparse version of \cite[Theorem 3.3]{Jelonek05}, which is, in turn, a generalization of the classical Perron's theorem (see \cite[Satz 57]{Perron27}).
A similar result in the context of implicitization of rational varieties has been proved in \cite{DHM}.

\begin{proposition} \label{prop:degimplicit}
Let $\mathbb{L}$ be a field of characteristic zero, $h_0, \dots, h_n \in \mathbb{L}[x_1, \dots, x_n]\setminus \mathbb{L}$ polynomials with supports $\cA_0,
\dots, \cA_n \subset (\Z_{\ge0})^n$ and $D \in \mathbb{N}$. If the map
$\mathbf{h}:\mathbb{\overline {L}}^n \to \mathbb{\overline{L}}^{n+1}$, $\mathbf{h}(x) =(h_0(x), \dots, h_n(x))$, is generically finite, then there exists a nonzero polynomial $W
\in \mathbb{L}[t_0, \dots, t_n]$ such that $W(h_0, \dots, h_n) =
0$ and
$$ \deg(W(t_0^{D},t_1, \dots, t_n))\le
MV_{n+1}(\widetilde{\cA}_0\cup\{De_{0}\},
\widetilde{\cA}_1\cup\{0,e_{0}\}, \dots,
\widetilde{\cA}_n\cup\{0,e_{0}\}),$$
where, for every $0\le i \le n$, $\widetilde \cA_i := \{ (0,\alpha) \in \Z^{n+1} \mid \alpha \in \cA_i\}$, and $e_0 := (1,0,\dots, 0)$.
\end{proposition}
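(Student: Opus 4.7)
The plan is to take $W$ to be a reduced defining polynomial of the image hypersurface $Z:=\overline{\mathbf{h}(\overline{\mathbb{L}}^{n})}\subset\overline{\mathbb{L}}^{n+1}$ and to bound $\deg W(T_{0}^{D},t_{1},\dots,t_{n})$ by intersecting the corresponding hypersurface in $\overline{\mathbb{L}}^{n+1}$ with a generic parametric line, then applying Bernstein's theorem to an auxiliary $(n{+}1)\times(n{+}1)$ sparse system.

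Since $\mathbf{h}$ is generically finite, $\dim Z = n$, so $Z$ is a hypersurface defined over $\mathbb{L}$; let $W\in\mathbb{L}[t_{0},\dots,t_{n}]$ be a reduced defining polynomial, whence $W(h_{0},\dots,h_{n})=0$ automatically. Because $h_{0}\notin\mathbb{L}$, the image is not contained in $\{t_{0}=0\}$, so $W(0,t_{1},\dots,t_{n})\ne 0$. Lemma \ref{lem:reduced} then yields that $\widetilde{W}(T_{0},t_{1},\dots,t_{n}):=W(T_{0}^{D},t_{1},\dots,t_{n})$ is reduced in $\mathbb{L}[T_{0},t_{1},\dots,t_{n}]$, and its zero set in $\overline{\mathbb{L}}^{n+1}$ consists of the points $(T_{0},t_{1},\dots,t_{n})$ for which the system $h_{0}(x)=T_{0}^{D}$, $h_{i}(x)=t_{i}$ ($1\le i\le n$) is solvable in $x\in\overline{\mathbb{L}}^{n}$.

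To bound $\deg\widetilde{W}$, I would choose generic $c_{1},\dots,c_{n},\lambda_{1},\dots,\lambda_{n}\in\overline{\mathbb{L}}^{*}$ and set $\gamma(\tau):=(\tau,c_{1}+\lambda_{1}\tau,\dots,c_{n}+\lambda_{n}\tau)$. For generic parameters, $P(\tau):=\widetilde{W}(\gamma(\tau))$ is a separable univariate polynomial of degree exactly $\deg\widetilde{W}$ (by reducedness of $\widetilde{W}$ and genericity of the line), with $P(0)=W(0,c_{1},\dots,c_{n})\ne 0$; hence $P$ has $\deg\widetilde{W}$ simple roots, all in $\overline{\mathbb{L}}^{*}$. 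Each root $\tau^{*}$ lifts to a point $(\tau^{*},x^{*})\in(\overline{\mathbb{L}}^{*})^{n+1}$ satisfying
$$F_{0}:=h_{0}(x)-\tau^{D}=0,\qquad F_{i}:=h_{i}(x)-c_{i}-\lambda_{i}\tau=0 \quad (i=1,\dots,n),$$
with $x^{*}$ in the torus by standard genericity. Distinct $\tau^{*}$ yield distinct lifts, so $\deg\widetilde{W}$ is at most the number of isolated torus solutions of $\{F_{0},\dots,F_{n}\}$. Reading supports in $\mathbb{Z}^{n+1}$ with $\tau$ as the first coordinate (corresponding to $e_{0}=(1,0,\dots,0)$), these are exactly $\widetilde{\cA}_{0}\cup\{De_{0}\}$ and $\widetilde{\cA}_{i}\cup\{0,e_{0}\}$ for $i\ge 1$, and Bernstein's theorem delivers the claimed mixed-volume bound.

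The main obstacle is the genericity step: producing a nonempty Zariski open set of parameters $(c,\lambda)\in(\overline{\mathbb{L}}^{*})^{2n}$ for which (i) $P$ is separable of full degree, (ii) all roots of $P$ lie in $\overline{\mathbb{L}}^{*}$, and (iii) each root $\tau^{*}$ admits a lift with $x^{*}\in(\overline{\mathbb{L}}^{*})^{n}$. Items (i) and (ii) follow from reducedness of $\widetilde{W}$ and nonvanishing of $W(0,\cdot)$, respectively; item (iii) uses that $\mathbf{h}((\overline{\mathbb{L}}^{*})^{n})$ is Zariski dense in $Z$, since the torus is dense in $\overline{\mathbb{L}}^{n}$ and $\mathbf{h}$ is a morphism. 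Intersecting the complements of the corresponding bad loci yields the required open set of parameters.
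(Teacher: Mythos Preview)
Your proposal is correct and matches the paper's argument essentially step by step: both take $W$ to be the irreducible equation of the image hypersurface, invoke Lemma~\ref{lem:reduced} to obtain reducedness of $W(T_0^{D},\cdot)$, intersect the resulting hypersurface with a generic line (your parametrization $\gamma(\tau)=(\tau,c_1+\lambda_1\tau,\dots,c_n+\lambda_n\tau)$ is exactly the paper's codimension-$n$ linear space after Gaussian elimination), lift the intersection points to torus solutions of the auxiliary system $h_0(x)-\tau^{D}=0$, $h_i(x)-c_i-\lambda_i\tau=0$, and apply Bernstein's bound. The only presentational difference is that the paper organizes the lifting step via an explicit incidence variety $\widetilde Y$ and the identity $\overline{\pi_y(\widetilde Y)}=Y$, whereas you argue directly from the density of $\mathbf{h}((\overline{\mathbb{L}}^{*})^{n})$ in $Z$.
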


\begin{proof}
As $\mathbf{h}$ is generically finite, $\overline{\mathbf{h}(\overline{\mathbb{{L}}}^n)}$ is an irreducible hypersurface in $\overline{\mathbb{L}}^{n+1}$. Then, there exists an irreducible polynomial $W \in \mathbb{L}[t_0, \dots, t_n]$ defining it and, therefore, satisfying $W(h_0, \dots, h_n) =
0$.

Let
$P(T_0, t_1 \dots, t_n) = W(T_0^{D},t_1,\dots, t_n) \in \mathbb{L}[T_0,t_1,\dots,t_n]$ and $Y= \{ (y_0,\dots, y_n)  \in \overline{\mathbb{L}}^{n+1}  \mid P(y_0,\dots, y_n) = 0\}$. Note that, by Lemma \ref{lem:reduced}, $P$ is a reduced polynomial and, therefore, $\deg P = \deg Y$.

Consider $\widetilde{Y} = \{ (y,x) \in \overline{\LL}^{n+1} \times  \overline{\LL}^{n} \mid y_0^{D} = h_0(x), y_1=h_1(x), \dots, y_n= h_n(x) \}$. If $\pi_y: \overline{\LL}^{n+1} \times  \overline{\LL}^{n} \to \overline{\LL}^{n+1}$ denotes the projection to the first $n+1$ coordinates, then we will show that
\begin{equation} \label{identity}\overline{\pi_y (\widetilde{Y})} =Y.
\end{equation}
It is clear that $\pi_y (\widetilde{Y}) \subseteq Y$. To prove the converse inclusion, consider a nonzero  polynomial $g \in \LL[t_0, \dots, t_n]$ such that $\overline{\mathbf{h}(\overline{\mathbb{{L}}}^n)} \cap \{ g \ne 0\} \subseteq  \mathbf{h}(\overline{\mathbb{{L}}}^n)$. Note that $P=W(T_0^{D}, t_1,\dots, t_n)$ and $g(T_0^{D}, t_1,\dots, t_n)$ do not have common factors, because $W$ and $g$ in $\LL[t_0, \dots, t_n]$ do not have common factors. Then, the Zariski closure of $ \{y\in \overline{\LL}^{n+1} \mid P(y) =0, g(y_0^{D}, y_1, \dots, y_n)\ne 0\}$ is $Y$.  In addition, for every $y \in  \overline{\LL}^{n+1}$ such that $P(y) = 0$ and $g(y_0^{D}, y_1,\dots, y_n) \ne 0$, we have that $(y_0^{D}, y_1,\dots, y_n) \in \mathbf{h}(\overline{\mathbb{{L}}}^n)$ and, therefore, there exists $x \in \overline{\LL}^{n}$ such that $(y,x) \in \widetilde{Y}$. We conclude identity \eqref{identity} holds.

Now we are going to estimate $\deg (\pi_y(\widetilde{Y}))$. By identity \eqref{identity},  $\pi_y(\widetilde{Y})$ is equidimensional of dimension $n$; then, it suffices to count the number of points in its intersection with a generic linear variety of codimension $n$.  By means of Gaussian elimination, we may assume the linear variety is defined by equations in $\LL [T_0,t_1,\dots, t_n]$ of the form $L_i = t_i + a_i T_0 + b_i$, $i=1,\dots, n$.

Note that there is a one-to-one correspondence between the points $(y_0, \dots, y_n) \in \pi_y(\widetilde{Y} ) \cap \{ L_i=0 ; 1 \le i \le n\}$ and the common zeros $(y_0, x_1, \dots, x_n)$ of the system
$$ h_0(x)-y_0^{D} =0, \ h_1(x) + a_1 y_0 + b_1=0, \dots, h_n(x) + a_n y_0 + b_n=0.$$

Since for generic $a_i, b_i$ $(1\le i \le n)$ the common zeros of this system in $\overline{\LL}^{n+1}$ are isolated and have all nonzero coordinates, by \cite{Bernstein75}, the number of these solutions is bounded by $MV_{n+1}(\widetilde{\cA}_0\cup\{De_{0}\},
\widetilde{\cA}_1\cup\{0,e_{0}\}, \dots,
\widetilde{\cA}_n\cup\{0,e_{0}\}).$
\end{proof}

\subsection{Degree bounds for the Nullstellensatz}

  Our first result in the mixed context is the following upper bound for the degrees in the Nullstellensatz.

\begin{proposition} \label{prop:sparseNulls} Let $\K$ be an
algebraically closed field of characteristic zero, $s \le n+1$ and $f_1, \dots, f_s \in
\K[x_1, \dots, x_n]$ be nonzero polynomials with supports $\cA_1,
\dots, \cA_{s} \subset (\Z_{\ge0})^n$. Let $d=\max\{\deg(f_i) \mid 1 \le i \le s\}$. If $V(f_1,
\dots, f_s)=\emptyset$, there exist polynomials $g_1, \dots, g_s\in \K[x_1, \dots, x_n]$
such that $1 = \sum_{i=1}^s g_i f_i
$ satisfying, for every $1\le i \le s$,
$$\deg(g_if_i) \le d \cdot MV_{n+1}(\widetilde \cA_1 \cup \Delta_{n+1},\dots, \widetilde \cA_s \cup \Delta_{n+1}, \Delta_{n+1}^{(n+1-s)}).$$
\end{proposition}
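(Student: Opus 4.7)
The strategy is to apply the sparse Perron-type Proposition~\ref{prop:degimplicit} to a system of $n+1$ polynomials consisting of $f_1, \dots, f_s$ together with $n+1-s$ generic affine linear forms, and then extract a Nullstellensatz identity from the resulting algebraic relation. Pick generic affine linear forms $\ell_1, \dots, \ell_{n+1-s} \in \K[x_1, \dots, x_n]$ (none when $s = n+1$) and form the map $\mathbf{h} \colon \K^n \to \K^{n+1}$ by $\mathbf{h}(x) = (f_1(x), \dots, f_s(x), \ell_1(x), \dots, \ell_{n+1-s}(x))$. For a Zariski generic choice of the coefficients of the $\ell_j$'s this map is generically finite, so Proposition~\ref{prop:degimplicit} applied with $D = 1$ yields an irreducible polynomial $W \in \K[t_0, \dots, t_n]$ satisfying $W(\mathbf{h}) = 0$. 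Using $\widetilde\Delta_n \cup \{e_0\} = \widetilde\Delta_n \cup \{0, e_0\} = \Delta_{n+1}$ (which holds because $0 \in \Delta_n$) together with monotonicity of mixed volumes for the inclusions $\widetilde\cA_i \cup \{0, e_0\} \subseteq \widetilde\cA_i \cup \Delta_{n+1}$, one obtains
$$\deg W \;\le\; MV_{n+1}\bigl(\widetilde\cA_1 \cup \Delta_{n+1}, \dots, \widetilde\cA_s \cup \Delta_{n+1}, \Delta_{n+1}^{(n+1-s)}\bigr).$$

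To extract the Nullstellensatz identity, I would decompose $W$ along the first $s$ variables as $W = W_0(t_s, \dots, t_n) + \sum_{i=0}^{s-1} t_i\, Q_i(t_0, \dots, t_n)$, where $W_0 := W|_{t_0 = \cdots = t_{s-1} = 0}$, and substitute $\mathbf{h}$ to obtain
$$W_0(\ell_1, \dots, \ell_{n+1-s}) = -\sum_{i=1}^s f_i \, Q_{i-1}(\mathbf{h}) \qquad \text{in } \K[x_1, \dots, x_n].$$
The crux is to show that $V(W) \cap \{t_0 = \cdots = t_{s-1} = 0\} = \emptyset$ for generic $\ell_j$'s: the image $\mathbf{h}(\K^n)$ misses this linear subspace because $V(f_1, \dots, f_s) = \emptyset$, and its Zariski closure can only add points of this subspace via sequences $x \to \infty$ along directions in the common kernel of the linear parts of the $\ell_j$'s on which all $f_i$'s tend to zero; a dimension count in $\P^{n-1}$ (the common kernel projectivizes to dimension $s - 2$, while the common zero locus at infinity of the leading forms of the $f_i$'s has expected dimension $n-1-s$) shows that these subvarieties do not meet for generic choices. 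Since $V(W) \cap \{t_0 = \cdots = t_{s-1} = 0\} = \emptyset$ implies the polynomial $W_0 \in \K[t_s, \dots, t_n]$ has no zeros in $\K^{n+1-s}$, algebraic closedness of $\K$ forces $W_0$ to be a nonzero constant $c \in \K^*$. Dividing by $-c$ then yields $1 = \sum_{i=1}^s g_i f_i$ with $g_i := -c^{-1}\, Q_{i-1}(\mathbf{h})$.

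The degree bound is then straightforward: since $\deg Q_{i-1} \le \deg W - 1$ (because $t_{i-1}\, Q_{i-1}$ is part of the decomposition of $W$ and has total degree at most $\deg W$) and each coordinate of $\mathbf{h}$ has degree at most $d$, we have $\deg(g_i f_i) \le d(\deg W - 1) + d = d \cdot \deg W$, which is bounded above by $d$ times the mixed volume of the first paragraph, giving exactly the claimed estimate. The main technical obstacle is the verification that the Zariski closure of $\mathbf{h}(\K^n)$ avoids the linear subspace $\{t_0 = \cdots = t_{s-1} = 0\}$: the finite part is immediate from $V(f_1, \dots, f_s) = \emptyset$, but controlling the boundary behavior at infinity requires the generic-position argument for the linear forms sketched above, which is the key technical input that allows the proof to bypass any toric-infinity assumption on the $f_i$'s.
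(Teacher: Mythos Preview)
Your approach has a genuine gap at the crux you yourself identify: the claim that $\overline{\mathbf{h}(\K^n)}\cap\{t_0=\cdots=t_{s-1}=0\}=\emptyset$ (equivalently, that $W_0$ is a nonzero constant) is false in general. Take $n=2$, $s=3$ and $f_1=x_1$, $f_2=x_1x_2-1$, $f_3=x_1^2x_2$. Then $V(f_1,f_2,f_3)=\emptyset$, the map $\mathbf{h}=(f_1,f_2,f_3)$ is generically injective, and the irreducible equation of $\overline{\mathbf{h}(\K^2)}$ is $W=t_2-t_0t_1-t_0$, so $W_0=W(0,0,0)=0$. The same phenomenon persists for $s\le n$: with $n=3$, $s=3$, the same $f_1,f_2,f_3$ (now in $\K[x_1,x_2,x_3]$) and a generic $\ell_1$ give $W=t_2-t_0t_1-t_0$, which does not involve $t_3$ at all, and again $W_0\equiv 0$. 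Your dimension count fails because the $f_i$ are \emph{given}, not generic: their leading forms may have a common zero locus in $\P^{n-1}$ of dimension far exceeding $n-1-s$ (here all three leading forms vanish at $[0{:}1]$), and when $s=n+1$ there are no linear forms whose genericity could compensate.

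This is exactly why the paper (following Jelonek) introduces the auxiliary variable $z$ and the map $\Phi(x,z)=(zf_1(x),\dots,zf_s(x),x)$. The hypothesis $V(f_1,\dots,f_s)=\emptyset$ makes $\Phi$ a closed embedding of $\K^{n+1}$, so a generic linear projection $\pi$ to $\K^{n+1}$ yields a \emph{finite} (not merely generically finite) morphism $\Psi=\pi\circ\Phi$. Finiteness forces the minimal polynomial $P(t_1,\dots,t_{n+1},z)$ of $z$ over $\K[\Psi]$ to be \emph{monic} in $z$; expanding $P(\Psi(x,z),z)$ and reading off the top $z$-coefficient then produces an expression of the form $1-\sum g_if_i$ with the constant $1$ guaranteed by monicity. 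Proposition~\ref{prop:degimplicit} is applied over $\K(z)$ to bound $\deg_t W$, and the minimality of $P$ transfers the bound. In short, the extra variable $z$ is precisely the device that replaces your unprovable emptiness claim by an algebraic integrality statement that holds for arbitrary $f_i$ without any control at toric infinity.
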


\begin{proof} We adapt the proof of \cite[Theorem 3.6]{Jelonek05} to our setting. Without loss of generality, we may assume that $f_i\notin \K$ for $i=1,\dots, s$.
Consider the map $\Phi:\K^{n+1}\to \K^{s+n}$, $$\Phi(x_1, \dots, x_n,z)=(zf_1(x), \dots,
zf_s(x),x). $$
As $1 \in (f_1, \dots, f_s)$, this map is one to one and its image $\textrm{Im}(\Phi)$ is a closed subset of $\K^{s+n}$  of dimension $n+1$.
Then, a generic linear projection $\pi:\textrm{Im}(\Phi) \to \K^{n+1}$  is finite and, therefore, it induces a finite morphism $\Psi= \pi \circ \Phi: \K^{n+1} \to \K^{n+1}$.  If $y=(y_1,\dots,y_s)$ and $\pi(y,x) = (L_1(y,x),\dots, L_{n+1}(y,x))$, where $L_1,\dots, L_{n+1}$ are linear forms and their coefficient matrix $A\in \K^{(n+1)\times (s+n)}$  is generic, we may
assume that the determinant of the $(n+1) \times (n+1)$ submatrix of $A$ consisting of its first $n+1$ columns is not zero. Thus, by multiplying by the inverse of this submatrix, we
may assume, without loss of generality, that $\pi (y, x) = (y_1+l_1(x),\dots, y_s+l_s(x),l_{s+1}(x), \dots, l_{n+1}(x))$, and therefore,
$$\Psi(x,z) =
(zf_1(x)+l_1(x),\dots, zf_s(x)+l_s(x),l_{s+1}(x), \dots,l_{n+1}(x)).$$
As $\Psi$ is finite, there exists a minimal polynomial $P$ in $\K[t_1, \dots, t_{n+1}, z]$ monic in $z$ such that $P(\Psi(x,z),z) = 0$.

If $N:= \deg_z (P)$, the coefficient of $z^N$ in the expression $P(\Psi(x,z),z)$ has the form $1-\sum_{i=1}^s g_if_i$.  To estimate the degrees of the polynomials in this expression, note that, for a polynomial $Q\in \K[t_1,\dots, t_{n+1}]$, the degree in the variables $x$ of $Q(\Psi(x,z))$ is at most $\deg_t Q(t_1^{d_1},\dots, t_s^{d_s}, t_{s+1},\dots, t_{n+1})$, where $d_i=\deg(f_i)$ for every $1\le i \le s$. This implies that
\begin{equation}\label{eq:deg_figi}
\deg(g_i f_i) \le \deg_t P(t_1^{d_1},\dots, t_s^{d_s}, t_{s+1},\dots, t_{n+1}, z),
\end{equation}
Then, it suffices to obtain an upper bound for the degree in the variables $t=(t_1,\dots, t_{n+1})$ of this polynomial.

In order to do so, consider the field $\mathbb{L}=\K(z)$ and apply Proposition \ref{prop:degimplicit} to $D=1$ and the polynomials $zf_1(x)+l_1(x),\dots, zf_s(x)+l_s(x),l_{s+1}(x), \dots,l_{n+1}(x)\in \mathbb{L}[x]$, which induce a generically finite map $\widehat\Psi : \overline{\mathbb{L}}^n \to \overline{\mathbb{L}}^{n+1}$. Since,  for every $1\le i \le s$, the support of $zf_i(x) +l_i(x)$ is $\cA_i \cup \Delta_n$ and, for every $s+1\le i \le n+1$, the support of $l_i(x)$ is $\Delta_n$, it follows that there exists a nonzero polynomial $W \in \mathbb{L}[t_1,\dots, t_{n+1}]$ such that $W(\widehat \Psi (x)) = 0$ and
\begin{equation}\label{eq:degtW}
\deg_t (W) \le MV_{n+1}(\widetilde \cA_1 \cup \Delta_{n+1},\dots, \widetilde \cA_s \cup \Delta_{n+1}, \Delta_{n+1}^{(n+1-s)}).
\end{equation}
By clearing denominators, we may assume that $W\in \K[t_1,\dots, t_{n+1},z]$. The minimality of $P$ implies that $P$ divides $W$ and so,
\begin{equation} \label{eq:degdiv}
\deg_t P(t_1^{d_1},\dots, t_s^{d_s}, t_{s+1}, \dots, t_{n+1}, z) \le \deg_t W(t_1^{d_1},\dots, t_s^{d_s}, t_{s+1}, \dots, t_{n+1},z)\le d\cdot \deg_t( W).
\end{equation}
The result follows from inequalities \eqref{eq:deg_figi}, \eqref{eq:degdiv} and \eqref{eq:degtW}.
\end{proof}

When the number of polynomials involved is at most $n$, the previous bound can be rewritten in terms of an $n$-dimensional mixed volume:

\begin{remark}\label{rem:identityMV} If $s\le n$, it is not difficult to see that
$$MV_{n+1}(\widetilde \cA_1 \cup \Delta_{n+1},\dots, \widetilde
\cA_s \cup \Delta_{n+1}, \Delta_{n+1}^{(n+1-s)})= MV_{n}(\cA_1 \cup
\Delta_{n},\dots, \cA_s \cup \Delta_{n}, \Delta_{n}^{(n-s)})$$
and then, in this case, the bound stated in Proposition \ref{prop:sparseNulls} can be re-written as
\begin{equation}\label{eq:ndimsparsebound}
\deg(g_if_i) \le d \cdot
MV_{n}(\cA_1 \cup \Delta_{n},\dots, \cA_s \cup \Delta_{n}, \Delta_{n}^{(n-s)}).
\end{equation}
\end{remark}

Note that in the particular case of a polynomial system $f_1, \dots, f_n, f_{n+1} \in \Q[x_1,\dots, x_n]$ where $f_i= x_i - a_i$ for $i=1,\dots, n$, and $f_{n+1}$ is a polynomial with support $d \Delta_n$ such that $f_{n+1} (a_1,\dots, a_n)\ne 0$, the bound for the degrees given in Proposition \ref{prop:sparseNulls} is $d^2$. However, it is easy to write $1= \sum_{i=1}^{n+1} g_i f_i$  with $\deg(g_if_i) \le d$ for every $1\le i\le n+1$.  This is in fact the well-known degree bound in the Nullstellensatz in terms of the degrees of the polynomials (see \cite{Jelonek05}).
However, we may obtain another bound for the degrees in the sparse Nullstellensatz, which enables us to deduce a refinement of our previous result, by applying Proposition \ref{prop:degimplicit} in a different way.

\begin{proposition} \label{prop:refsparseNulls}
With the same assumptions and notation as in Proposition \ref{prop:sparseNulls}, for every $1\le j \le s$, let $d_j := \deg(f_j)$, $\delta_j :=\max\{d_i \mid 1\le i \le s, i\ne j\}$ and $M_j := MV_{n}( \cA_1 \cup \Delta_{n},\dots,  \cA_{j-1} \cup \Delta_{n}, \cA_{j+1} \cup \Delta_{n},\dots,\cA_s \cup \Delta_{n}, \Delta_{n}^{(n+1-s)})$.
Then, for every $1\le i \le s$, we have:
$$ \deg(g_if_i)\le \min_{1\le j \le s} \{d_j \delta_{j} M_j \}.
$$
\end{proposition}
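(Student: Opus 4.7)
The plan is to refine the proof of Proposition \ref{prop:sparseNulls} by, for each index $j\in\{1,\dots,s\}$ separately, producing polynomials $g_i^{(j)}$ satisfying $1=\sum_i g_i^{(j)}f_i$ with $\deg(g_i^{(j)}f_i)\le d_j\delta_jM_j$; taking the minimum over $j$ then yields the statement. The improvement comes from placing $f_j$ in a distinguished slot of the finite map used there and applying Proposition \ref{prop:degimplicit} over $\LL=\K(z)$ with $D=d_j$ rather than $D=1$, so that the degree of $f_j$ is absorbed directly into the mixed-volume estimate instead of contributing the crude global factor $d=\max_id_i$.

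Fix $j$ and let $\sigma\colon\{1,\dots,s-1\}\to\{1,\dots,s\}\setminus\{j\}$ be any bijection. Permuting the generic linear projection used in the proof of Proposition \ref{prop:sparseNulls}, I may assume the finite map is
\[
\Psi_j(x,z)=\bigl(zf_j+l_j,\ zf_{\sigma(1)}+l_{\sigma(1)},\dots,zf_{\sigma(s-1)}+l_{\sigma(s-1)},\ l_{s+1},\dots,l_{n+1}\bigr).
\]
The same argument as in Proposition \ref{prop:sparseNulls} produces the minimal polynomial $P_j\in\K[t_0,\dots,t_n,z]$, monic in $z$, polynomials $g_i^{(j)}$ with $1=\sum_i g_i^{(j)}f_i$ extracted from the top $z$-coefficient of $P_j(\Psi_j,z)$, and, exactly as in the derivation of \eqref{eq:deg_figi},
\[
\deg(g_i^{(j)}f_i)\ \le\ \deg_t P_j\bigl(t_0^{d_j},\,t_1^{d_{\sigma(1)}},\dots,t_{s-1}^{d_{\sigma(s-1)}},\,t_s,\dots,t_n,\,z\bigr).
\]
Now apply Proposition \ref{prop:degimplicit} over $\LL=\K(z)$ with $D=d_j$ to obtain a nonzero $W_j\in\LL[t_0,\dots,t_n]$ with $W_j(\Psi_j)=0$ and $\deg_t W_j(t_0^{d_j},t_1,\dots,t_n)\le MV^*$, where
\[
MV^*=MV_{n+1}\bigl(\widetilde{\cA_j\cup\Delta_n}\cup\{d_je_0\},\ \widetilde{\cA_{\sigma(1)}\cup\Delta_n}\cup\{0,e_0\},\ldots,\widetilde{\cA_{\sigma(s-1)}\cup\Delta_n}\cup\{0,e_0\},\ (\widetilde{\Delta_n}\cup\{0,e_0\})^{(n+1-s)}\bigr).
\]
Clearing denominators in $z$, minimality of $P_j$ gives $P_j\mid W_j$ in $\K[t_0,\dots,t_n,z]$, so $\deg_t P_j(\ldots)\le\deg_t W_j(\ldots)$ under every substitution of the $t_i$'s.

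For each monomial $t_0^{\beta_0}\cdots t_n^{\beta_n}$ of $W_j$, the previous inequality gives $d_j\beta_0+\beta_1+\dots+\beta_n\le MV^*$. Substituting $t_i\mapsto t_i^{d_{\sigma(i)}}$ for $1\le i\le s-1$ then produces a monomial of total $t$-degree $d_j\beta_0+\sum_{i=1}^{s-1}d_{\sigma(i)}\beta_i+\sum_{i=s}^{n}\beta_i\le\delta_j(d_j\beta_0+\beta_1+\dots+\beta_n)\le\delta_jMV^*$, using $d_{\sigma(i)}\le\delta_j$ and $\delta_j\ge1$. Therefore $\deg(g_i^{(j)}f_i)\le\delta_jMV^*$.

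The principal geometric step, and the one I expect to be the main obstacle, is the mixed-volume inequality $MV^*\le d_jM_j$. The key observation is that $\conv\bigl(\widetilde{\cA_j\cup\Delta_n}\cup\{d_je_0\}\bigr)\subseteq d_j\cdot\conv(\Delta_{n+1})$: every $\alpha\in\cA_j\cup\Delta_n$ satisfies $|\alpha|\le d_j$ (using $d_j\ge1$), so $(0,\alpha)$ lies in the scaled simplex $d_j\cdot\conv(\Delta_{n+1})$, and $d_je_0$ is manifestly a vertex thereof. Monotonicity and multilinearity of the mixed volume therefore yield
\[
MV^*\ \le\ d_j\cdot MV_{n+1}\bigl(\Delta_{n+1},\,\widetilde{\cA_{\sigma(1)}\cup\Delta_n}\cup\{0,e_0\},\ldots,\widetilde{\cA_{\sigma(s-1)}\cup\Delta_n}\cup\{0,e_0\},\,\Delta_{n+1}^{(n+1-s)}\bigr),
\]
and the last $(n+1)$-dimensional mixed volume equals $M_j$ by Remark \ref{rem:identityMV} applied with $s-1$ nontrivial supports (and $n+2-s$ trivial ones). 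Combining the estimates gives $\deg(g_i^{(j)}f_i)\le d_j\delta_jM_j$, and minimizing over $j$ completes the proof.
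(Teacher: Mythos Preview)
Your proof is correct and follows essentially the same approach as the paper's: for each $j$ you apply Proposition~\ref{prop:degimplicit} with $D=d_j$, use the inclusion $\conv\bigl(\widetilde{\cA_j\cup\Delta_n}\cup\{d_je_0\}\bigr)\subseteq d_j\cdot\conv(\Delta_{n+1})$ together with monotonicity and multilinearity of mixed volumes to pull out the factor $d_j$, reduce the remaining $(n+1)$-dimensional mixed volume to $M_j$ via Remark~\ref{rem:identityMV}, and bound the further substitutions $t_i\mapsto t_i^{d_{\sigma(i)}}$ by the crude factor $\delta_j$. Your monomial-by-monomial justification of the last step is a bit more explicit than the paper's one-line inequality, but the content is identical.
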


\begin{proof}
 By taking $D:=d_1$ in the statement of Proposition \ref{prop:degimplicit}, we deduce that the polynomial $W$ appearing in the proof of Proposition \ref{prop:sparseNulls} satisfies
$$
\deg_t W(t_1^{d_1},t_2, \dots, t_{n+1},z) \le MV_{n+1}(\widetilde \cA_1 \cup \widetilde \Delta_{n} \cup \{d_1e_0\},\widetilde \cA_2 \cup \Delta_{n+1},\dots, \widetilde \cA_s \cup \Delta_{n+1}, \Delta_{n+1}^{(n+1-s)}).
$$
Taking into account that $\widetilde \cA_1 \cup \widetilde \Delta_{n} \cup \{d_1e_0\}\subset d_1 \Delta_{n+1}$, by basic properties of mixed volumes,
we obtain:
\begin{eqnarray*}
\deg_t W(t_1^{d_1},t_2, \dots, t_{n+1},z) &\le& d_1 MV_{n+1}(\widetilde \cA_2 \cup \Delta_{n+1},\dots, \widetilde \cA_s \cup \Delta_{n+1}, \Delta_{n+1}^{(n+2-s)}) \\
&=& d_1 MV_{n}(\cA_2 \cup \Delta_{n},\dots, \cA_s \cup \Delta_{n}, \Delta_{n}^{(n+1-s)}) \ =  \ d_1 M_1
\end{eqnarray*}
Then, if $\delta_{1}:=\max \{d_2,\dots, d_s\}$, we may replace inequality \eqref{eq:degdiv} with
\begin{eqnarray*}
\deg_t P(t_1^{d_1},\dots, t_s^{d_s}, t_{s+1}, \dots, t_{n+1}, z) &\le&
\deg_t W(t_1^{d_1},\dots, t_s^{d_s}, t_{s+1}, \dots, t_{n+1}, z)\\
&\le & \delta_{1} \cdot \deg_t W(t_1^{d_1},t_2, \dots, t_{n+1},z) \\
&\le & d_1 \delta_{1} M_1.
\end{eqnarray*}
By interchanging the roles of $f_1$ and $f_j$ for every $1\le j \le s$,  we deduce from \eqref{eq:deg_figi} the stated upper bound.
\end{proof}

Combining the results in Propositions \ref{prop:sparseNulls} and \ref{prop:refsparseNulls}, we deduce:

\begin{theorem} \label{thm:sparseNullsbound} Let $\K$ be an
algebraically closed field of characteristic zero, $s \le n+1$ and $f_1, \dots, f_s \in
\K[x_1, \dots, x_n]$ be nonzero polynomials with supports $\cA_1,
\dots, \cA_{s} \subset (\Z_{\ge0})^n$. Let $d:=\max\{\deg(f_i) \mid 1 \le i \le s\}$, $M:=MV_{n+1}(\widetilde \cA_1 \cup \Delta_{n+1},\dots, \widetilde \cA_s \cup \Delta_{n+1}, \Delta_{n+1}^{(n+1-s)})$ and, for $1\le j \le s$, let $d_j:=\deg(f_j)$, $\delta_j := \max\{ d_i \mid 1\le i \le s, \, i\ne j\}$ and $M_j := MV_{n}( \cA_1 \cup \Delta_{n},\dots,  \cA_{j-1} \cup \Delta_{n}, \cA_{j+1} \cup \Delta_{n},\dots,\cA_s \cup \Delta_{n}, \Delta_{n}^{(n+1-s)})$. If $V(f_1, \dots, f_s)=\emptyset$, there exist polynomials $g_1, \dots, g_s\in \K[x_1, \dots, x_n]$ such that $1 = \sum_{i=1}^s g_i f_i
$ satisfying, for every $1\le i \le s$,
$
\deg(g_if_i) \le N(\cA_1,\dots, \cA_s; n),
$
where $$N(\cA_1,\dots, \cA_s; n):=\min\{ d M ;  d_j \delta_j M_j, 1\le j \le s\}.$$
\end{theorem}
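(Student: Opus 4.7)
The plan is essentially to observe that Theorem \ref{thm:sparseNullsbound} follows immediately by combining the two preceding propositions, which have already done all the substantive work. Specifically, Proposition \ref{prop:sparseNulls} produces polynomials $g_1^{(1)}, \dots, g_s^{(1)} \in \K[x_1, \dots, x_n]$ satisfying $1 = \sum_{i=1}^s g_i^{(1)} f_i$ with $\deg(g_i^{(1)} f_i) \le dM$ for every $i$, while Proposition \ref{prop:refsparseNulls} yields polynomials $g_1^{(2)}, \dots, g_s^{(2)}$ giving an identity $1 = \sum_{i=1}^s g_i^{(2)} f_i$ with $\deg(g_i^{(2)} f_i) \le \min_{1 \le j \le s} d_j \delta_j M_j$.

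Since we only need to exhibit \emph{some} representation achieving the bound $N(\cA_1, \dots, \cA_s; n) = \min\{dM; \, d_j \delta_j M_j, 1 \le j \le s\}$, the strategy is simply to compare the two bounds $dM$ and $\min_j d_j \delta_j M_j$ and select the family $(g_1^{(k)}, \dots, g_s^{(k)})$ for $k \in \{1,2\}$ whose corresponding uniform degree bound is smaller. The selected family then satisfies $\deg(g_i f_i) \le N(\cA_1, \dots, \cA_s; n)$ for every $1 \le i \le s$, which is precisely the claim.

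There is no real obstacle here, since both bounds were established separately in the two previous propositions; the content of the theorem is purely the packaging of those two statements into a single cleaner bound. The only thing worth double-checking is that both propositions apply under the identical hypotheses stated in the theorem (algebraically closed field of characteristic zero, $s \le n+1$, $V(f_1, \dots, f_s) = \emptyset$, and all $f_i$ nonzero), which is indeed the case. Hence the proof reduces to a one-line invocation of Propositions \ref{prop:sparseNulls} and \ref{prop:refsparseNulls} followed by taking the minimum.
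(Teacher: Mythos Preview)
Your proposal is correct and matches the paper's approach exactly: the paper simply states that Theorem~\ref{thm:sparseNullsbound} is obtained by ``combining the results in Propositions~\ref{prop:sparseNulls} and~\ref{prop:refsparseNulls}'' with no further argument. One small sharpening worth noting is that in the paper the two propositions actually bound the \emph{same} polynomials $g_1,\dots,g_s$ (Proposition~\ref{prop:refsparseNulls} refines inequality~\eqref{eq:degdiv} in the proof of Proposition~\ref{prop:sparseNulls}), so no selection between two families is needed---but your version is of course also valid.
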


The following example illustrates how our bound for
the degrees in the Nullstellensatz for mixed sparse systems may  improve the
bounds from \cite{Kollar88}, \cite{Jelonek05}, \cite{KPS01} and \cite{Sombra99} considerably.

\begin{example}\label{ex:MejoraPrevias} Let $d\in \N$, $d\ge 2$.
For every $ 1 \le i \le n$, consider a polynomial $f_i \in
\Q[x_1, \dots, x_n]$ with support $\cA = \Delta_n \cup \{ 2e_1,\dots, de_1\}$, $f_i = a_{i0} + \sum_{j=1}^n a_{ij} x_j +
\sum_{k=2}^d b_{ik} x_1^k$, and let
$f_{n+1} \in \Q[x_1, \dots, x_n]$ be a polynomial with support $\cA_{n+1} =
d\cdot \Delta_n\subset (\Z_{\ge 0})^n$. Assume $f_1,\dots, f_n,f_{n+1}$ do not have common zeros in $\C^n$.
Then, by Theorem \ref{thm:sparseNullsbound}, there exist polynomials $g_1,\dots, g_n, g_{n+1}\in \Q[x_1,\dots, x_n]$ such that
$$\sum_{i=1}^{n+1} g_if_i=1\quad  \hbox{and} \quad \deg(g_if_i)\le d^3,$$
since $MV_{n+1}((\widetilde \cA \cup\Delta_{n+1})^{(n)}, \widetilde \cA_{n+1}\cup \Delta_{n+1}) = d^2$,
$MV_n( (\cA \cup\Delta_{n})^{(n)}) = d$, $MV_n( (\cA \cup\Delta_{n})^{(n-1)}, \cA_{n+1}\cup\Delta_n) = d^2$ and $\deg(f_i ) = d$ for every $1\le i \le n+1$. For this system,
\begin{itemize}
\item the bound in \cite[Corollary 1.9]{Kollar88} is
$\deg(g_if_i)\le d^n$, assuming $d\ge 3$;
\item the bound in \cite[Theorem 1.1]{Jelonek05} is
$\deg(g_if_i)\le 2d^n-1$;
\item the bound in \cite[Theorem 3.19]{Sombra99} is  $\deg(g_i f_i) \le 2d^n$;
\item the bound from \cite[Corollary 4.11]{KPS01} is $\deg(g_i)\le 2n^2d^{n}$.
\end{itemize}
\end{example}

\begin{remark}
The polynomials obtained by homogeneizing those in Example \ref{ex:MejoraPrevias} define a projective variety of codimension $2$ contained in the hyperplane at infinity. This would imply that the bound in \cite[Corollary 1.3]{AndWul11} (see also \cite{EinLaz99}), which takes into account distinguished components at infinity, is similar to our bound. More generally, it would be interesting to generalize these results to the mixed sparse setting.
\end{remark}

{}From Theorem \ref{thm:sparseNullsbound}, we can also deduce a bound for the degrees in the Nullstellensatz for a family of $s>n+1$ sparse polynomials.
Let $f_1,\dots, f_s\in \K[x_1,\dots, x_n]$ be nonzero polynomials with supports $\cA_1,\dots, \cA_s \subset (\Z_{\ge 0})^n$ such that $V(f_1,\dots, f_s) = \emptyset$. By taking generic linear combinations of $f_1,\dots, f_s$, for a set  $J = \{j_1,\dots, j_{n+1}\}$ with $1\le j_1<\dots<j_{n+1} \le s$, we can obtain polynomials $h_1,\dots, h_{n+1}$ with supports
\begin{equation}\label{eq:suppI}
\cA_{J,1}:=\cA_{j_1} \cup \bigcup_{i\notin J}\cA_i, \dots, \cA_{J,n+1}:=\cA_{j_{n+1}} \cup \bigcup_{i\notin J}\cA_i
\end{equation}
such that $V(h_1,\dots, h_{n+1} )= \emptyset$. By Theorem \ref{thm:sparseNullsbound}, there exists polynomials $g_{J,1},\dots, g_{J, n+1}\in \K[x_1,\dots, x_n]$ such that $1 = \sum_{i=1}^{n+1} g_{J, i} h_i$ and $\deg(g_{J, i} h_i) \le N(\cA_{J,1}, \dots, \cA_{J,n+1}; n)$ for every $1\le i \le n+1$. It follows that there exist polynomials $g_1,\dots, g_s \in \K[x_1,\dots, x_n]$ such that $1 = \sum_{i=1}^s g_i f_i$ satisfying $\deg(g_i) \le   N(\cA_{J,1}, \dots, \cA_{J,n+1}; n)$ for every $1\le i \le s$. Therefore, we have:

\begin{corollary}\label{coro:smayorn+1}
Let $s>n+1$ and $f_1,\dots, f_s\in \K[x_1,\dots, x_n]$ be nonzero polynomials with supports $\cA_1,\dots, \cA_s \subset (\Z_{\ge 0})^n$ such that $V(f_1,\dots, f_s) = \emptyset$. Then, using the notation in Theorem \ref{thm:sparseNullsbound}, there exist polynomials $g_1,\dots, g_s \in \K[x_1,\dots, x_n]$ such that $1=\sum_{i=1}^s g_i f_i$ satisfying, for every $1\le i \le s$,
$$\deg(g_i) \le \min_{J \subset \{1,\dots, s\}\atop |J| = n+1} N(\cA_{J, 1}, \dots, \cA_{J, n+1}; n),$$
where $\cA_{J,1},\dots, \cA_{J,n+1}$ are the sets defined in \eqref{eq:suppI}.
\end{corollary}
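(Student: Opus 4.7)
The plan is to reduce the case of $s>n+1$ polynomials to the case of exactly $n+1$ polynomials already handled by Theorem \ref{thm:sparseNullsbound}. First I would fix a subset $J = \{j_1,\dots,j_{n+1}\} \subseteq \{1,\dots,s\}$ and, for $1 \le k \le n+1$, form a generic linear combination
\[
h_k := f_{j_k} + \sum_{i \notin J} \lambda_{k,i}\, f_i \qquad (\lambda_{k,i} \in \K),
\]
whose support is automatically contained in $\cA_{J,k}$ as defined in \eqref{eq:suppI}, and which is nonzero for generic $\lambda$ because $f_{j_k} \ne 0$.

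The main obstacle is to choose $\lambda$ so that $V(h_1,\dots,h_{n+1}) = \emptyset$. I would argue via an incidence dimension count: fix any $x \in \overline{\K}^n$; since $V(f_1,\dots,f_s) = \emptyset$, some $f_{i_0}(x) \ne 0$, so each equation $h_k(x) = 0$, regarded as a condition on the $s-n-1$ parameters $(\lambda_{k,i})_{i \notin J}$, is either infeasible (when all of $(f_i(x))_{i \notin J}$ vanish while the constant $f_{j_k}(x)$ does not) or a single affine hyperplane. In either case the fiber over $x$ in the incidence variety $\{(\lambda,x) : h_1(x) = \cdots = h_{n+1}(x) = 0\}$ has codimension at least $n+1$ in the $\lambda$-space $\K^{(n+1)(s-n-1)}$. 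Hence the projection of the incidence variety to the $\lambda$-space is a constructible set of dimension at most $(n+1)(s-n-1) - 1$, and any $\lambda$ in its complement satisfies $V(h_1,\dots,h_{n+1}) = \emptyset$.

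Once this emptiness is secured, applying Theorem \ref{thm:sparseNullsbound} to $h_1,\dots,h_{n+1}$ (viewed as supported on $\cA_{J,1},\dots,\cA_{J,n+1}$) yields $g_{J,1},\dots,g_{J,n+1}$ with $1 = \sum_{k=1}^{n+1} g_{J,k}\, h_k$ and $\deg(g_{J,k} h_k) \le N(\cA_{J,1},\dots,\cA_{J,n+1}; n)$; in particular $\deg(g_{J,k}) \le N(\cA_{J,1},\dots,\cA_{J,n+1}; n)$ since $h_k \ne 0$. Substituting the definition of $h_k$ and grouping by $f_i$ gives
\[
1 = \sum_{k=1}^{n+1} g_{J,k}\, f_{j_k} + \sum_{i \notin J} \Bigl( \sum_{k=1}^{n+1} \lambda_{k,i}\, g_{J,k} \Bigr) f_i,
\]
so taking $g_{j_k} := g_{J,k}$ and, for $i \notin J$, $g_i := \sum_k \lambda_{k,i}\, g_{J,k}$ produces $1 = \sum_{i=1}^s g_i f_i$ with $\deg(g_i) \le \max_k \deg(g_{J,k}) \le N(\cA_{J,1},\dots,\cA_{J,n+1}; n)$. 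The stated bound follows by choosing $J$ to minimize this quantity.
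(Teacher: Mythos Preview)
Your proposal is correct and follows essentially the same approach as the paper: fix $J$, take generic linear combinations $h_k$ of the stated shape so that $V(h_1,\dots,h_{n+1})=\emptyset$, apply Theorem~\ref{thm:sparseNullsbound}, and substitute back. You in fact supply more detail than the paper (the incidence dimension count and the explicit regrouping into $g_i$'s); the only imprecision is that your dichotomy ``infeasible or a hyperplane'' omits the case where all $f_i(x)$ with $i\notin J$ vanish and $f_{j_k}(x)=0$ too, but then some other $f_{j_{k'}}(x)\ne 0$ forces the fiber to be empty anyway, so the conclusion stands.
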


\subsection{Noether exponent}

This section is devoted to proving an upper bound for the Noether exponent of a polynomial ideal generated by a mixed system in terms of the supports of the given generators.
We first prove a suitable sparse version of the Generalized Elimination Theorem from \cite[Theorem 4.3]{Jelonek05}.

\begin{proposition}\label{prop:elimination} Let $s \le n$ and $f_1, \dots, f_s \in \K[x_1, \dots, x_n]$ be nonzero polynomials with
supports $\cA_1, \dots, \cA_{s} \subset (\Z_{\ge0})^n$. Let
$d=\max\{\deg(f_i)\mid 1 \le i \le s\}$. Assume $V(f_1, \dots, f_s) \ne \emptyset$. If $G\in \K[x_1, \dots, x_n]$ is a nonzero polynomial
which is constant over every irreducible component of $V(f_1, \dots, f_s)$,
there exist polynomials $g_1, \dots, g_s \in \K[x_1, \dots,
x_n]$ and a nonzero univariate polynomial $\phi\in \K[T]$ such that
$$\phi(G) = \sum_{i=1}^s g_i f_i \ \hbox{ and } \ \deg(g_if_i) \le \deg(G) \cdot d \cdot MV_{n}(\cA_1\cup\Delta_{n}, \dots, \cA_s\cup
\Delta_{n},\Delta_{n}^{(n-s)}).$$
\end{proposition}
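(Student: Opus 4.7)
The plan is to adapt the strategy of Proposition \ref{prop:sparseNulls} by adjoining a new transcendental $T$ over $\K$ and applying our sparse Nullstellensatz bound to the augmented system $f_1,\dots,f_s, G(x)-T\in \K(T)[x_1,\dots, x_n]$. The key observation is that this system has no common zeros in $\overline{\K(T)}^n$: any common zero would lie on some irreducible component $C_j$ of $V(f_1,\dots, f_s)$ where $G\equiv c_j\in \K$, forcing $T = c_j\in \K$, which contradicts the transcendence of $T$ over $\K$. This makes Proposition \ref{prop:refsparseNulls} available.

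Since $s+1\le n+1$, I would apply Proposition \ref{prop:refsparseNulls} over the field $\K(T)$ to these $s+1$ polynomials, selecting the index $j=s+1$ (corresponding to $G-T$) to realize the minimum in the bound. With $d_{s+1}=\deg_x(G-T)=\deg G$, $\delta_{s+1}=d$ and $M_{s+1}=MV_n(\cA_1\cup\Delta_n,\dots,\cA_s\cup\Delta_n,\Delta_n^{(n-s)})$, this yields $g_1,\dots,g_{s+1}\in \K(T)[x]$ satisfying
$$1 = \sum_{i=1}^s g_i(x,T)f_i(x) + g_{s+1}(x,T)(G(x)-T), \qquad \deg_x(g_if_i)\le \deg G\cdot d\cdot M_{s+1}.$$
Clearing denominators by a suitable $\phi(T)\in \K[T]$, let $\widetilde g_i:=\phi\, g_i\in \K[x,T]$; then
$$\phi(T) = \sum_{i=1}^s \widetilde g_i(x,T)\, f_i(x) + \widetilde g_{s+1}(x,T)(G(x)-T).$$
Specializing $T\mapsto G(x)$ annihilates the last summand and produces the required identity $\phi(G) = \sum_{i=1}^s \widetilde g_i(x,G(x))\, f_i(x)$.

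The main obstacle is controlling $\deg_x(\widetilde g_i(x,G(x))f_i)$ after the substitution $T\mapsto G(x)$: the naive bound $\deg_x\widetilde g_i + \deg G\cdot\deg_T\widetilde g_i + \deg f_i$ requires also a bound on $\deg_T\widetilde g_i$. I would handle this by running the proofs of Propositions \ref{prop:degimplicit} and \ref{prop:refsparseNulls} using the weighted total degree on $\K[x,T]$ assigning $\operatorname{wt}(x_j)=1$ and $\operatorname{wt}(T)=\deg G$; under this grading $\operatorname{wt}(G-T)=\deg G$, the mixed-volume estimate for the auxiliary hypersurface of Proposition \ref{prop:degimplicit} translates into a weighted-degree estimate, and the resulting bound $\operatorname{wt}(\widetilde g_if_i)\le \deg G\cdot d\cdot M_{s+1}$ is preserved (as an upper bound on $\deg_x$) by the specialization $T\mapsto G(x)$, yielding the stated inequality.
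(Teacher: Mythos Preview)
Your Rabinowitsch-style reduction is sound through the first two steps: the system $f_1,\dots,f_s,G-T$ indeed has no zeros over $\overline{\K(T)}$ (since $Q(G):=\prod_j(G-c_j)\in\sqrt{I}$ forces $Q(T)=0$ at any such zero), and applying Proposition~\ref{prop:refsparseNulls} with $j=s+1$ gives the correct target bound $\deg G\cdot d\cdot M_{s+1}$ for the $x$-degrees of $g_if_i$ over $\K(T)$.

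The genuine gap is exactly where you locate it, but your proposed fix is not yet a proof. Proposition~\ref{prop:degimplicit} bounds a weighted degree of $W$ \emph{in the target variables $t_0,\dots,t_n$}; when you work over $\K(T)$, the parameter $T$ sits in the coefficients of $W$, not among the $t_i$, so no statement you have gives a bound on the $(x,T)$-weighted degree of $\widetilde g_i$ after clearing denominators. Saying you will ``run the proof with the weighted degree on $\K[x,T]$'' hides the key step: to make Proposition~\ref{prop:degimplicit} produce a bound that involves the $T$-weight, $T$ must appear as one of the \emph{coordinates} of the map $\mathbf h$, not as a base-field parameter. Concretely, you would need to apply Proposition~\ref{prop:degimplicit} over $\K(z)$ to the $n{+}1$ polynomials
\[
zf_1+\mu_1,\ \dots,\ zf_s+\mu_s,\ L_{s+1},\ \dots,\ L_n,\ G \ \in\ \K(z)[x_1,\dots,x_n],
\]
with $D=\deg G$ attached to the last slot; the resulting relation is a polynomial $P_z\in\K[T_1,\dots,T_n,T,z]$ whose weighted degree $\deg P_z(T_1^{d_1},\dots,T_s^{d_s},T_{s+1},\dots,T_n,T^{\deg G})$ is what controls $\deg\big(\widetilde g_i(x,G(x))f_i(x)\big)$.

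That reformulation is precisely the paper's proof: it follows Jelonek's elimination argument, making $G$ the extra coordinate of a map $\pi_{\ell,G}:\Gamma\to\K^{n+2}$ and taking the minimal polynomial $P_z\in\K[T_1,\dots,T_n,T][Z]$ of $z$, whose leading coefficient furnishes $\phi(T)$. The degree bound then comes from a single application of Proposition~\ref{prop:degimplicit} with $G$ among the $h_i$'s, yielding
\[
\deg P_z(T_1^{d_1},\dots,T_s^{d_s},T_{s+1},\dots,T_n,T^{\deg G})\ \le\ \deg(G)\cdot d\cdot MV_n(\cA_1\cup\Delta_n,\dots,\cA_s\cup\Delta_n,\Delta_n^{(n-s)}).
\]
So your strategy is morally correct, but completing it forces you back to the paper's construction; the reduction to Proposition~\ref{prop:refsparseNulls} over $\K(T)$ does not by itself yield the required weighted bound.
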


\begin{proof}
Let $\Phi\colon \K^{n+1} \to \K^{s+n}$, $\Phi(x,z) = (zf_1(x),\dots, zf_s(x), x)$. Since $G$ is constant over each irreducible component of $V:=V(f_1,\dots, f_s)$, we have that $G(V)$ is a finite set $\{a_1,\dots, a_q\}$. Consider the polynomial $Q= \prod_{1\le i \le q} (T-a_i)\in \K[T]$. Then,  $Q(G) \in \sqrt{(f_1,\dots, f_s)}$ and, therefore, localizing in the multiplicative set of the powers of $Q(G)$, we have that $1\in (f_1,\dots, f_s)_{Q(G)} \subset \K[x_1,\dots, x_n]_{Q(G)}$. It follows that the map $\Phi$ is one to one outside the zero set of $Q(G)$.

Set $\Gamma \subset \K^{s+n}$ for the Zariski closure of $\mbox{Im}(\Phi)$, which is an $(n+1)$-irreducible variety.
Then, for generic linear forms $\ell_0,\ell_1,\dots, \ell_n$ in $\K[y_1,\dots,y_s,x_1,\dots, x_n]$, the projection
$\pi_\ell : \Gamma \to \K^{n+1}$, $\pi_\ell(y, x) = (\ell_0(y,x), \dots, \ell_n(y,x))$, is a finite morphism.

Consider now $\pi_{\ell, G}: \Gamma \to \K^{n+2}$, $\pi_{\ell, G} (y,x) = (\ell_0(y,x), \dots, \ell_n(y,x), G(x))$.
There is a nonzero polynomial $P\in \K[t_0,\dots, t_n, T]$ such that $P(\ell_0(y,x), \dots, \ell_n(y,x), G(x)) =0$.
By making a change of variables $T_i = t_i - \alpha_i t_0$, for $i=1,\dots, n$, and $T_0 = t_0$, if $L_i = \ell_i - \alpha_i \ell_0$, we obtain that
$P(\ell_0, L_1 +\alpha_1 \ell_0, \dots, L_n +\alpha_n \ell_0, G) = \rho(G) \ell_0^D + \sum_{j=1}^D A_j(L_1,\dots, L_n, G) \ell_0^{D-j}
 = \widetilde P (\ell_0,L_1,\dots, L_n,G)$,
where $\widetilde P \in \K[T_0,\dots, T_n, T]$ is a nonzero polynomial whose leading coefficient as a polynomial in $T_0$ equals $\rho(T)$, which depends only on the variable $T$.
Thus, we obtain a finite morphism
$\K[L_1,\dots,L_n, G]_{\rho(G)} \to \K[\ell_0,L_1,\dots, L_n,G]_{\rho(G)}$.

Recalling that we also have a finite map $\K[\ell_0,L_1,\dots,L_n]=\K[\ell_0,\ell_1,\dots, \ell_n]\to \K[\Gamma ]$ and  a bijection $\K[\Gamma]_{Q(G)} \to \K[x_1,\dots, x_n,z]_{Q(G)}$, we deduce that the induced composition
$$\K[L_1,\dots, L_n, G]_{Q(G)\rho(G)}\to \K[\Gamma]_{Q(G)\rho(G)}\to  \K[x_1,\dots, x_n,z]_{Q(G) \rho(G)}$$
is finite.
Without loss of generality, we may assume that, for $i=1,\dots, s$,
$L_i(y,x) = y_i + \mu_i(x)$ for a generic linear form $\mu_i\in \K[x]$, and that, for $i=s+1,\dots, n$, $L_i$ depends only on the variables $x$.

Let $P_z \in \K[T_1,\dots, T_n,T][Z]$ be a minimal polynomial of $z$; then, $$P_z(zf_1(x) +\mu_1(x),\dots, zf_s(x)+\mu_s(x), L_{s+1}(x),\dots, L_n(x), z) = 0$$ and the leading coefficient $\phi$ of $P_z$ is a factor of a power of $Q(T)\rho(T)$.
The proof finishes similarly as the proof of Proposition \ref{prop:sparseNulls}, by considering the coefficient of $z^{\deg_z(P_z)}$ in the expansion of $P_z(zf_1(x) +\mu_1(x),\dots, zf_s(x)+\mu_s(x), L_{s+1}(x),\dots, L_n(x), z)$, which is of the form $\phi(G) - \sum_{i=1}^{s} g_i(x) f_i(x)$. Considering $P_z$ as a polynomial in $\K(z)[T_1,\dots, T_n, T]$ and applying Proposition \ref{prop:degimplicit},  we deduce that
$$\deg P_z(T_1^{d_1},\dots, T_s^{d_s}, T_{s+1},\dots, T_n, T^{\deg(G)}) \le d  \cdot \deg P_z(T_1,\dots, T_n, T^{\deg(G)})$$
$$\le d \cdot  MV_{n+1}(\widetilde \cA_1 \cup\Delta_{n+1},\dots, \widetilde \cA_s \cup \Delta_{n+1}, \deg(G)\Delta_{n+1}, \Delta_{n+1}^{(n-s)})$$
$$\le \deg(G) \cdot d \cdot MV_{n+1}(\widetilde \cA_1 \cup\Delta_{n+1},\dots, \widetilde \cA_s \cup \Delta_{n+1}, \Delta_{n+1}^{(n+1-s)}).$$
$$ {} = \deg(G) \cdot d \cdot MV_{n}(\cA_1\cup\Delta_{n}, \dots, \cA_s\cup
\Delta_{n},\Delta_{n}^{(n-s)}).$$
\end{proof}

The main result of this section is the following.

\begin{theorem}\label{thm:Noetherexp}
Let $f_1, \dots, f_s \in \K[x_1, \dots, x_n]$ be nonzero polynomials with supports $\cA_1,
\dots, \cA_{s} \subset (\Z_{\ge0})^n$ and $d=\max\{\deg(f_i)\mid
1 \le i \le s\}$. Let $I$ be the ideal of $\K[x_1, \dots, x_n]$ generated by $f_1, \dots,
f_s$. Then, $(\sqrt{I})^\mu \subset I$ for
$$\mu \le d\cdot MV_{n}(\cA_1\cup\Delta_n, \dots, \cA_s\cup
\Delta_n, \Delta_n^{(n-s)}) \hbox{ if } s\le n,$$
and
$$\mu \le d \cdot \min_{J\subset\{1,\dots, s\} \atop |J|=n} \{ MV_n(\cA_{J,1}\cup \Delta_n,\dots, \cA_{J, n} \cup \Delta_n)\} \hbox{ if } s\ge n+1,$$
where, for $J= \{j_1,\dots, j_n\}$, $\cA_{J,i}:=\cA_{j_i} \cup \bigcup_{k\notin J}\cA_k$ for every $1\le i \le n$.
\end{theorem}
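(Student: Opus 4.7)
The plan is to treat the two cases $s\le n$ and $s\ge n+1$ separately; both reduce to a single application of Proposition~\ref{prop:elimination} combined with the same elementary algebra. It suffices in either case to show that $G^\mu\in I$ for every $G\in\sqrt I$, because, since $\K$ has characteristic zero, the standard polarization identity applied to $(\lambda_1 G_1+\cdots+\lambda_\mu G_\mu)^\mu\in I$ (and extracting the coefficient of $\lambda_1\cdots\lambda_\mu$, which carries a nonzero factor of $\mu!$) then upgrades this to $(\sqrt I)^\mu\subset I$.

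\emph{Case $s\le n$.} If $V(I)=\emptyset$ the Nullstellensatz gives $I=(1)$ and the statement is trivial, so I may assume $V(I)\ne\emptyset$ and fix $G\in\sqrt I$. Since $G$ vanishes identically on $V(f_1,\dots,f_s)$, it is constant on each irreducible component, and Proposition~\ref{prop:elimination} produces a nonzero univariate $\phi\in\K[T]$ with $\phi(G)\in I$ and $\deg\phi\le\mu:=d\cdot MV_n(\cA_1\cup\Delta_n,\dots,\cA_s\cup\Delta_n,\Delta_n^{(n-s)})$. Evaluating the identity $\phi(G)=\sum g_i f_i$ at any point of $V(I)$ forces $\phi(0)=0$, so $\phi(T)=T^{\mu_0}\psi(T)$ with $\psi(0)\ne 0$ and $1\le\mu_0\le\deg\phi$. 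Since $\psi(G)$ takes the nonzero value $\psi(0)$ at every point of $V(I)$, one has $V(I,\psi(G))=\emptyset$, so $\psi(G)$ is invertible modulo $I$: writing $A\psi(G)+C=1$ with $A\in\K[x]$ and $C\in I$ and multiplying by $G^{\mu_0}$ yields $G^{\mu_0}=A\phi(G)+CG^{\mu_0}\in I$, and hence $G^\mu\in I$.

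\emph{Case $s\ge n+1$.} For each $J=\{j_1,\dots,j_n\}\subset\{1,\dots,s\}$ I form the combinations $h_k:=f_{j_k}+\sum_{i\notin J}\lambda_{k,i}f_i$ for $k=1,\dots,n$ with generic $\lambda$, so that each $h_k$ has support in $\cA_{J,k}$, degree at most $d$, and lies in $I$. The technical point is that for generic $\lambda$ one has $V(h_1,\dots,h_n)=V(I)\cup E$ with $E$ a finite (possibly empty) set of isolated extra points: outside $V(I)$, which is covered by the open charts $\{f_i\ne 0\}$, the ideal $I$ restricts to the unit ideal and the $h_k$ behave like $n$ generic elements of the localization $\K[x]_{f_i}$, whose common zero set has codimension $n$ and hence dimension $0$. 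Granted this, any $G\in\sqrt I$ is constant on each irreducible component of $V(h_1,\dots,h_n)$---identically zero on the components contained in $V(I)$ and trivially constant at each isolated point of $E$---so Proposition~\ref{prop:elimination} applied to $h_1,\dots,h_n$ and $G$ (the $s=n$ instance) produces a nonzero $\phi\in\K[T]$ with $\phi(G)\in(h_1,\dots,h_n)\subset I$ and $\deg\phi\le\mu_J:=d\cdot MV_n(\cA_{J,1}\cup\Delta_n,\dots,\cA_{J,n}\cup\Delta_n)$. The factorization and invertibility argument from the first case then gives $G^{\mu_J}\in I$; taking the minimum over $J$ yields the claimed bound.

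\emph{Main obstacle.} The delicate step is the Bertini-type statement used in the second case; the rest is a cosmetic adaptation of the unmixed proof of Proposition~\ref{prop:Noetherexpunmixed}. Making this statement rigorous requires either treating the $\lambda_{k,i}$ as transcendental indeterminates and working over the function field $\K(\lambda)$, or arguing directly via the localization-plus-dimension-count sketched above.
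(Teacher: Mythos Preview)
Your proposal is correct and follows essentially the same route as the paper: apply Proposition~\ref{prop:elimination} to $G\in\sqrt I$, factor the resulting univariate polynomial $\phi$, cancel the part coprime to $T$, and in the case $s\ge n+1$ first reduce to $n$ generic combinations whose common zero set agrees with $V(I)$ up to finitely many extra points. The only cosmetic differences are that you cancel $\psi(G)$ by showing it is a \emph{unit} modulo $I$ via the Nullstellensatz (since $\psi(G)$ is nonzero on $V(I)$), whereas the paper observes that each linear factor $G-a_j$ avoids every associated prime of $I$ and is therefore a non-zero-divisor; and you upgrade $G^\mu\in I$ to $(\sqrt I)^\mu\subset I$ by polarization, while the paper simply cites the corresponding argument in \cite{Jelonek05}. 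The Bertini-type step you flag as the main obstacle is exactly the step the paper also invokes without further detail.
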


\begin{proof}
Assume $I\ne (1)$, since otherwise there is nothing to prove.

First, we consider the case $s\le n$. Let $G\in \sqrt{I}$.
By Proposition \ref{prop:elimination}, there exist $\phi\in
\K[T]\setminus\{ 0 \}$  such that
$\phi(G) \in I$ and $\deg(\phi(G)) \le \deg(G) \cdot d \cdot MV_{n}(\cA_1\cup\Delta_{n}, \dots, \cA_s\cup
\Delta_{n},\Delta_{n}^{(n-s)})$.
Write $\phi(T) = T^{\mu_G} \prod_{j=1}^{r}(T-a_j)$ with $a_j \in \K\setminus\{0\}$. Since, for $1\le j \le r$,  $G-a_j$ does not lie in any associated prime of the ideal $I$ (because $G$ lies in all of them), the fact that $G^{\mu_G}\prod_{j=1}^{r}(G-a_j)\in I$ implies that $G^{\mu_G} \in I$. Now,
$$
\mu_G \cdot \deg(G) \le \deg (\phi(G)) \le  \deg(G) \cdot d \cdot MV_{n}(\cA_1\cup\Delta_{n}, \dots, \cA_s\cup
\Delta_{n},\Delta_{n}^{(n-s)}),
$$
and, therefore, $\mu_G \le d\cdot MV_{n}(\cA_1\cup\Delta_{n}, \dots, \cA_s\cup
\Delta_{n},\Delta_{n}^{(n-s)})$.

We conclude that there exists $\mu\in \Z_{\ge 0}$, $\mu\le  d\cdot MV_{n}(\cA_1\cup\Delta_{n}, \dots, \cA_s\cup
\Delta_{n},\Delta_{n}^{(n-s)})$, such that $G^\mu\in I$ for every $G\in \sqrt{I}$. From this fact, it is easy to prove that $(\sqrt{I}) ^\mu \subset I$ (see, for instance, the proof of \cite[Corollary 4.6]{Jelonek05}).

Assume now that $s\ge n+1$. By taking generic linear combinations of $f_1,\dots, f_s$, for a set  $J = \{j_1,\dots, j_{n}\}$ with $1\le j_1<\dots<j_{n} \le s$, we can obtain polynomials $h_1,\dots, h_{n}$ with supports $\cA_{J,1},\dots, \cA_{J, n}$
such that $V(h_1,\dots, h_{n} )\setminus V(f_1,\dots, f_s)$ is a finite set. Then, if $G\in \sqrt{I}$, it satisfies the assumptions in Proposition \ref{prop:elimination} for the polynomials $h_1,\dots, h_n$. Therefore, there exists $\phi\in \K[T]\setminus \{ 0\}$ such that $\phi(G) \in (h_1,\dots, h_n)$ and, as a consequence, $\phi(G) \in I$, with $\deg(\phi(G))\le \deg(G) \cdot d \cdot  MV_n(\cA_{J,1}\cup \Delta_n,\dots, \cA_{J, n} \cup \Delta_n)$. As before, it follows that $\mu \le d \cdot  MV_n(\cA_{J,1}\cup \Delta_n,\dots, \cA_{J, n} \cup \Delta_n)$.
\end{proof}

Finally, we present an example that compares our mixed bound for the Noether exponent to previously known bounds:

\begin{example}\label{ex:expNoether} Consider positive integers $D$ and  $1 \le D_1 \le \dots \le D_n$.  Let $\cA = \Delta_n\cup \{k(e_1+\dots +e_n); 1\le k \le D\}\subset (\Z_{\ge 0})^n$.  For an ideal $I= (f_1,\dots, f_n)$ in $\Q[x_1,\dots, x_n]$ generated by polynomials $f_1, \dots, f_n$ with supports $\cA_i = D_i\cdot \cA $, for all $ 1 \le i \le
n$, by Theorem \ref{thm:Noetherexp}, we have that $(\sqrt{I})^\mu \subset I$ for a non-negative integer $$\mu\le \Big(\prod_{i=1}^n D_i\Big) n^2 D^2 D_n,$$ since $MV_n(\cA_1,\dots, \cA_n) = \left(\prod_{i=1}^n D_i\right) MV_n(\cA^{(n)}) = \left(\prod_{i=1}^n D_i\right) nD$ and, for $1\le i \le n$, $\deg(f_i) = n D D_i$. On the other hand, in this case,
\begin{itemize}
\item the bound in \cite[Corollary 2.11]{Sombra99} is
$ n^3 D  D_n^n$;
\item the bound in \cite[Theorem 1.3]{Jelonek05} is
$ \Big(\prod_{i=1}^nD_i\Big) n^n D^n$.
\end{itemize}
In particular, when $D_1 = \dots = D_{n-1} = 1$ and $D_n = D$, our bound is $n^2 D^4$, whereas the bounds from \cite{Sombra99} and \cite{Jelonek05}  are $n^3 D^{n+1}$ and $n^n D^{n+1}$,  respectively.
\end{example}

\bigskip
\noindent \textbf{Acknowledgments.} The authors wish to thank the anonymous referees for their helpful comments.

\end{document}